\newcommand{\QQ}{\mathbb{Q}}
\newcommand{\kk}{\mathbb{k}}
\DeclareMathOperator{\Hom}{Hom}
\DeclareMathOperator{\End}{End}
\DeclareMathOperator{\Aut}{Aut}
\DeclareMathOperator{\hAut}{hAut}
\DeclareMathOperator{\Der}{Der}
\DeclareMathOperator{\id}{id}
\DeclareMathOperator{\Cyl}{Cyl}
\DeclareMathOperator{\Gr}{Gr}
\DeclareMathOperator{\CH}{CH}
\DeclareMathOperator{\Ger}{Ger}
\DeclareMathOperator{\wt}{wt}
\DeclareMathOperator{\res}{res}
\DeclareMathOperator{\incl}{incl}
\DeclareMathOperator{\Isom}{Isom}
\DeclareMathOperator{\colim}{colim}
\DeclareMathOperator{\Cobar}{Cobar}
\DeclareMathOperator{\susp}{\bf{s}}
\newcommand{\onto}{\twoheadrightarrow}
\newcommand{\tto}{\longrightarrow}
\newcommand{\fg}{\mathfrak{g}}
\newcommand{\bt}{\textbf{t}}
\newcommand{\sC}{\mathcal{C}}
\newcommand{\sF}{\mathcal{F}}
\newcommand{\sH}{\mathcal{H}}
\newcommand{\sM}{\mathcal{M}}
\newcommand{\sO}{\mathcal{O}}
\newcommand{\sQ}{\mathcal{Q}}
\renewcommand{\k}{\mathbb{k}}
\newcommand{\cat}[1]{\mathsf{#1}}
\newcommand{\cCh}{\cat{Ch_{\k}}}
\newcommand{\Tree}{\cat{Tree}}
\newcommand{\PF}{\cat{PF}}
\newcommand{\del}{\partial}
\newcommand{\ot}{\otimes}
\newcommand{\OP}{\mathbb{OP}}
\newtheorem{theorem}{Theorem}[section]
\newtheorem{lemma}[theorem]{Lemma}
\newtheorem{proposition}[theorem]{Proposition}
\newtheorem{corollary}[theorem]{Corollary}
\newenvironment{remark}[1][Remark.]{\begin{trivlist}
\item[\hskip \labelsep {\bfseries #1}]}{\end{trivlist}}
\newtheorem*{theorem*}{Theorem}
\newtheorem*{lemma*}{Lemma}
\newtheorem*{proposition*}{Proposition}
\newtheorem*{corollary*}{Corollary}
\newtheorem*{conjecture*}{Conjecture}
\begin{document}

\begin{center}
\section*{Action of derived automorphisms on infinity-morphisms}
\large{Brian Paljug}
\end{center}

\begin{abstract}

In this paper we investigate how to simultaneously change homotopy algebras of a certain type and a corresponding infinity morphism between them, and show that this can be done in a homotopically unique way. More precisely, for a reduced cooperad $\sC$, given ${\Cobar}(\sC)$-algebras $V$ and $W$ and an $\infty$-morphism $U: V \leadsto W$, and for the automorphism $\exp(D)$ for any degree $0$ closed $D \in \Der'({\Cobar}(\sC))$, we produce new ${\Cobar}(\sC)$-algebras and a new $\infty$-morphism $\widetilde{U} : V^D \leadsto W^D$ extending $U$ in a universal way. Operads play the central role in answering this question, in particular a $2$-colored operad $\Cyl(\sC)$ that governs pairs of $\infty$-algebras and $\infty$-morphisms between them.

%We indicate how this technique may applied to a claim made in a paper of Thomas Willwacher's, concerning the action of the Grothendieck-Teichm\"{u}ller Lie group $\text{GRT}_1$ on formality morphisms.

\end{abstract}

\section{Introduction}

Homotopy algebras and morphisms appear in many areas throughout mathematics; in homological algebra in the form of algebraic transfer theorems, in geometry in the study of iterated loop spaces, in deformation quantization in Kontsevich's formality theorem, and so on. Much work has been done to find the correct framework in which to study homotopy algebras, and the theory of operads is one such attempt. The complicated coherence relations that define homotopy algebras are encoded in the language of operads, which are easily manipulated with homological or combinatorial techniques; see \cite{AlgOps} for an excellent overview of these techniques. This is the approach we take in this paper.

While there is a notion of morphisms between homotopy algebras of a specific type, in practice and theory one is interested in the looser notions of $\infty$-morphisms, which themselves satisfy some complicated system of coherence relations. Since homotopy algebras can be defined as algebras over a specific operad, it seems natural to ask if $\infty$-morphisms can be defined in the language of operads. The answer is provided in \cite{OpCobarCyl} via a $2$-colored ``cylinder construction'' operad, which we restate and study further in this paper; similar ideas were also considered in \cite{HomDiagAlgs}, \cite{ForDefMor} and \cite{ResDiagAlg}. Specifically, given a cooperad $\sC$ we construct and study a $2$-colored operad $\Cyl(\sC)$ that governs pairs of homotopy algebras and $\infty$-morphisms between them. 

Our main goal is to answer the following question; given a pair of homotopy algebras and an $\infty$-morphism between them, can we change the homotopy algebras and the $\infty$-morphism simultaneously to get new homotopy algebras and a new $\infty$-morphism (all of the same type)? More specifically, given a derivation of the operad ${\Cobar}(\sC)$ governing the homotopy algebras $V$ and $W$, we can exponentiate that derivation to an automorphism of ${\Cobar}(\sC)$ and use that automorphism to define new ${\Cobar}(\sC)$-algebra structures on $V$ and $W$ via pullback; can we do the same to an $\infty$-morphism between $V$ and $W$, to create a new $\infty$-morphism that respects the new ${\Cobar}(\sC)$-algebra structures? We show that this is possible, and moreover that the answer is unique up to homotopy, using the previously mentioned techniques of operadic homological algebra. In particular, we have Theorem \ref{der-q-isom}, which is summarized below.

\begin{theorem*}
The maps
\begin{center}
\begin{tabular}{c c c c}
$\res_\alpha, \res_\beta :$ & $\Der(\Cyl(\sC))$ & $\tto$ & $\Der({\Cobar}(\sC))$
\end{tabular}
\end{center}
given by restricting to a single color $\alpha$ or $\beta$ are homotopic quasi-isomorphisms of dg Lie algebras at all filtration levels.
\end{theorem*}
\noindent This leads us to Theorem \ref{res-isom-h}:
\begin{theorem*}
The group homomorphisms
\begin{center}
\begin{tabular}{c c c c}
$\res_\alpha, \res_\beta :$ & $\Aut'(\Cyl(\sC))$ & $\tto$ & $\Aut'({\Cobar}(\sC))$
\end{tabular}
\end{center}
induce identical isomorphisms on homotopy classes:
\begin{center}
\begin{tabular}{c c c c}
$\res :$ & $\hAut'(\Cyl(\sC))$ & $\tto$ & $\hAut'({\Cobar}(\sC))$.
\end{tabular}
\end{center}
\end{theorem*}
\noindent This then allows us to answer the motivating question, shown in Theorem \ref{main-thm}:
\begin{theorem*}
Let $V$ and $W$ be ${\Cobar}(\sC)$-algebras for a cooperad $\sC$, and let $U: V \leadsto W$ be an $\infty$-morphism between them. Given a degree $0$ closed derivation $D \in \Der'({\Cobar} (\sC))$, there exists a degree $0$ cocycle $\widetilde{D} \in \Der'(\Cyl(\sC))$ such that $D$, $\widetilde{D}_\alpha$, and $\widetilde{D}_\beta$ are cohomologous in $\Der'(\Cobar(\sC))$. Therefore we can construct
\begin{align*}
U^{\widetilde{D}}: V^{\widetilde{D}_\alpha} \leadsto W^{\widetilde{D}_\beta}
\end{align*}
such that $V^{\widetilde{D}_\alpha}$ is homotopy equivalent to $V^D$ and $W^{\widetilde{D}_\beta}$ is homotopy equivalent to $W^D$, and so that the linear term of $U$ is unchanged: $U^{\widetilde{D}}_{(0)} = U_{(0)}$.
\end{theorem*}

In the subsequent paper \cite{TamConGRT} with V. Dolgushev, we show how our results may be applied to justify a claim made in Section 10.2 of Thomas Willwacher's paper \cite{KonGrGRT}, concerning the action of the Grothendieck-Teichm\"{u}ller group $\text{GRT}_1$ on formality morphisms. The Grothendieck-Teichm\"{u}ller group and Lie algebra are connected to Drinfeld associators, the absolute Galois group of $\QQ$, Kontsevich's graph complex, the homotopy theory of the operad governing Gerstenhaber algebras, and so on. This application is explored more fully in the paper \cite{TamConGRT}.

The author is very grateful to Vasily Dolgushev, who was instrumental to the creation and completion of this paper, and patiently endured (and continues to endure) many hours of questions and explanations. The results of this paper were first presented at the Higher Structure 2013: Operads and Deformation Theory conference held at the Isaac Newton Institue in Cambridge, UK. The author is grateful for the chance to discuss this work at that conference, and in particular would like to thank Bruno Valette, Christopher Rogers, and Thomas Willwacher for their comments and conversation. The author is partially supported by NSF grants DMS-0856196 and DMS-1161867.

\section{Preliminaries}

First, we set notation and recall some basic definitions and facts. For a general introduction to the theory of operads, see \cite{NotesAlgOps} or \cite{AlgOps}. Throughout, we work in the category of cochain complexes of graded vector spaces over a field $\k$ of characteristic zero, $\cCh$. We will freely use the abbreviation ``dg'' to stand for the phrase ``differential graded.'' Given an operad $\sO$,
\begin{align*}
\mu: \sO(n) \ot \sO(k_1) \ot ... \ot \sO(k_n) \to \sO(k_1+...+k_n)
\end{align*}
will denote operadic multiplication, while
\begin{align*}
\circ_i : \sO(n) \ot \sO(k) \to \sO(n+k-1)
\end{align*}
denotes the $i$-th elementary insertion, as usual. Dually, given a cooperad $\sC$,
\begin{align*}
\Delta : \sC(k_1+...+k_n) \to \sC(n) \ot \sC(k_1) \ot ... \ot \sC(k_n)
\end{align*}
denotes the cooperadic comultiplication, while
\begin{align*}
\Delta_i : \sC(n+k-1) \to \sC(n) \ot \sC(k)
\end{align*}
denotes the $i$-th elementary coinsertion. Occasionally, the more general (co)multiplications 
\begin{align*}
\mu_\bt : \underline{\sO}_n(\bt) \to \sO(n) \hspace{1in} \Delta_\bt : \sC(n) \to \underline{\sC}_n(\bt)
\end{align*}
from \cite{NotesAlgOps} will be needed, where $\underline{\sO}_n$ is the functor associated to the collection $\sO$ from the category $\Tree(n)$ of $n$-labeled rooted planar trees to $\cCh$, and $\bt \in \Tree(n)$ (likewise for $ \underline{\sC}_n$ and $\sC$). Later, we will need two subcategories of $\Tree(n)$. The first is $\Tree_2(n)$, the full subcategory of $\Tree(n)$ consisting of trees with exactly $2$ internal vertices. The second is $\PF_k(n)$ (``pitchforks''), the full subcategory of $\Tree(n)$ consisting of trees with exactly $k+1$ internal vertices, one of which has height $1$, and the other $k$ have height exactly $2$. Some examples of such trees can be found in Figures \ref{extree} and \ref{expf}.

\begin{figure}[h]
\noindent\makebox[\textwidth][c]{
\begin{minipage}{.5\textwidth}
\centering
\tikzstyle{node} = [circle, draw, solid, text centered]
\tikzstyle{leaf} = [circle, minimum width=3pt,fill, inner sep=0pt]
\tikzstyle{level 1}=[level distance=1cm, sibling distance=1cm]
\tikzstyle{level 2}=[level distance=1.5cm, sibling distance=1cm]
\tikzstyle{level 3}=[level distance=1.5cm, sibling distance=.8cm]
\begin{tikzpicture}[grow'=up]
\node[leaf] {}
	child {
		node[node] {}        
		child {
			node[node] {}  
			child {
				node[leaf, label=above:{\footnotesize{$1$}}] {}     
				edge from parent 
			}
			child {
				node[leaf, label=above:{\footnotesize{$2$}}] {}        
				edge from parent 
			}
			child {
				node[leaf, label=above:{\footnotesize{$3$}}] {}        
				edge from parent 
			}
			edge from parent   
		}
		child {
			node[leaf, label=above:{\footnotesize{$4$}}] {}        
			edge from parent 
		}
		child {
			node[leaf, label=above:{\footnotesize{$5$}}] {}        
			edge from parent 
		}
		child {
			node[leaf, label=above:{\footnotesize{$6$}}] {}        
			edge from parent 
		}
		edge from parent         
	};
\end{tikzpicture}
\caption{An element of $\Tree_2(6)$.}\label{extree}
\end{minipage}
\begin{minipage}{.5\textwidth}
\centering
\tikzstyle{node} = [circle, draw, solid, text centered]
\tikzstyle{leaf} = [circle, minimum width=3pt,fill, inner sep=0pt]
\tikzstyle{level 1}=[level distance=1cm, sibling distance=1cm]
\tikzstyle{level 2}=[level distance=1.5cm, sibling distance=1.8cm]
\tikzstyle{level 3}=[level distance=1.5cm, sibling distance=.8cm]
\begin{tikzpicture}[grow'=up]
\node[leaf] {}
	child {
		node[node] {}        
		child {
			node[node] {}  
			child {
				node[leaf, label=above:{\footnotesize{$1$}}] {}     
				edge from parent 
			}
			child {
				node[leaf, label=above:{\footnotesize{$2$}}] {}        
				edge from parent 
			}
			child {
				node[leaf, label=above:{\footnotesize{$3$}}] {}        
				edge from parent 
			}
			edge from parent   
		}
		child {
			node[node] {}  
			child {
				node[leaf, label=above:{\footnotesize{$4$}}] {}     
				edge from parent 
			}
			child {
				node[leaf, label=above:{\footnotesize{$5$}}] {}        
				edge from parent 
			}
			edge from parent   
		}
		child {
			node[node] {}
			child {
				node[leaf, label=above:{\footnotesize{$6$}}] {}     
				edge from parent 
			}
			child {
				node[leaf, label=above:{\footnotesize{$7$}}] {}        
				edge from parent 
			}
			child {
				node[leaf, label=above:{\footnotesize{$8$}}] {}        
				edge from parent 
			}
			edge from parent 
		}
		edge from parent         
	};
\end{tikzpicture}
\caption{An element of $\PF_3(8)$.}\label{expf}
\end{minipage}
}
\end{figure}

If an operad is augmented, $\sO_\circ$ will denote the kernel of the augmentation map, while $\sC_\circ$ will denote the cokernel of the coaugmentation map of a coaugmented cooperad. Throughout, $\del_\sO$ will denote the differential of an operad $\sO$, and likewise $\del_\sC$ for a cooperad $\sC$. A coaugmented cooperad $\sC$ is called \emph{reduced} if
\begin{align*}
\sC(0)=\{0\} \hspace{1in} \sC(1)=\k
\end{align*}
and hence $\sC_\circ(0)=\sC_\circ(1)=\{0\}$. We will assume that all cooperads are reduced for the remainder of the paper. 

The endomorphism operad of a dg vector space $V$, $\End_V$, is of particular importance, since it allows us to define algebras over an operad. We have $\End_V(n)=\Hom(V^{\ot n}, V)$, with operadic multiplication defined by function composition, and the symmetric action defined by rearranging tensor factors. Then we say that $V$ is an algebra over the operad $\sO$ (or that $V$ is an $\sO$-algebra) if we have a map of operads
\begin{align*}
\sO \to \End_V
\end{align*}
or, equivalently, we have multiplication maps
\begin{align*}
\mu_n : \sO(n) \ot V^{\ot n} \to V
\end{align*}
for all $n \geq 0$, satisfying appropriate associativity, equivariance, and unit axioms \cite{OpsPROPs}. This second formulation leads to the definition of $\sC$-coalgebras: we say that $V$ is a coalgebra over the cooperad $\sC$ if we have comultiplication maps
\begin{align*}
\Delta_n : V \to \sC(n) \ot V^{\ot n}
\end{align*}
satisfying appropriate dual axioms. This then leads to the definitions of the free $\sO$-algebra
\begin{align*}
\sO(V) = \bigoplus_{n \geq 0} \left(\sO(n) \ot V^{\ot n}\right)_{S_n}
\end{align*}
and the cofree $\sC$-coalgebra
\begin{align*}
\sC(V) = \bigoplus_{n \geq 0} \left(\sC(n) \ot V^{\ot n}\right)^{S_n}.
\end{align*}

Given a collection $\sQ$, we can form the free operad $\OP(\sQ)$. It is generally most convenient to think of elements of $\OP(\sQ)$ as rooted trees with internal vertices decorated by elements of $\sQ$, subject to an appropriate symmetry relation; with this in mind, $(\bt; x_1, ..., x_k)$ is the element of $\OP(\sQ)(n)$ where the $n$-labeled tree $\bt$ has $k$ internal vertices decorated by the elements $x_1,...,x_k$ of $\sM$, according to the total ordering on internal vertices. We will frequently identify an element $x \in \sQ(n)$ with the standard $n$-corolla decorated by $x$ in $\OP(\sQ)(n)$. It is occasionally useful to remember that at each level $\OP(\sQ)(n)=\colim \underline{\sQ}_n$ (strictly, speaking, this only defines the free pseudo-operad, with the free operad then obtained by formally adjoining a unit). Since the $S_n$ action on $\OP(\sQ)(n)$ permutes the labels, in this paper we will omit these labels when drawing elements of $\OP(\sQ)$.

Then, given a coaugmented cooperad $\sC$, we can define the cobar construction $\Cobar ( \sC)$ to be $\OP(\susp\sC_\circ)$ as an operad of graded vector spaces (where $\susp$ is the suspension operator of degree $1$), with the differential defined on generators $\susp x \in \susp\sC_\circ$ by
\begin{align*}
\del_{\Cobar} (\susp x) = -\susp\del_\sC(x)-\sum_{z \in \Isom( \Tree_2(n) )} (-1)^{|x_1|} (\bt_z; \susp x_1, \susp x_2)
\end{align*}
where the sum is taken over all isomorphism classes of $\Tree_2(n)$, $\bt_z$ is a representative of the isomorphism class $z\in \Isom( \Tree_2(n))$, and $\Delta_{\bt_z}(x)=\sum x_1 \ot x_2$. Note that we use Sweedler type notation in the above equation, and will continue to do so throughout the paper.

Much of the above notation extends immediately to the colored setting, so we will only focus on certain ideas and notation; our primary reference is \cite{StableI}. We will focus on the $2$-colored setting for now, and the $3$-colored versions should be clear (and will only be minimally needed in this paper). We will refer to our $2$ colors as $\alpha$ and $\beta$. Given a $2$-colored collection $\sQ$, $\sQ(a,b;\alpha)$ denotes the level of $\sQ$ with $a$ inputs of color $\alpha$, $b$ inputs of color $\beta$, and output of color $\alpha$. Similarly, $\sQ(a,b;\beta)$ indicates that the output is of color $\beta$. As in the single-color case, we have a free colored operad construction, governed by colored trees. The category of colored $n$-labeled trees is defined similarly to $\Tree(n)$, except that the edges carry colors, and morphisms must respect the coloring. In figures, edges with color $\alpha$ will be represented by solid lines, while edges of color $\beta$ will be represented by dashed lines.

As before, we will need two subcategories of colored $n$-labeled trees, but slightly more specialized than those given above. The first is $\Tree'_2(n)$, the full subcategory of $\Tree(n)$ consisting of trees with exactly $2$ internal vertices, such that the root edge carries color $\beta$, and all other edges carry color $\alpha$. The second is $\PF'_k(n)$, the full subcategory of colored $\Tree(n)$ consisting of trees with exactly $k+1$ internal vertices, one of which has height $1$, and the other $k$ have height exactly $2$, such that all leaf edges carry color $\alpha$ and all other edges carry color $\beta$. Some examples of such trees can be found in Figures \ref{excoltree} and \ref{excolpf}.

\begin{figure}[h]
\noindent\makebox[\textwidth][c]{
\begin{minipage}{.5\textwidth}
\centering
\tikzstyle{node} = [circle, draw, solid, text centered]
\tikzstyle{leaf} = [circle, minimum width=3pt,fill, inner sep=0pt]
\tikzstyle{level 1}=[level distance=1cm, sibling distance=1cm]
\tikzstyle{level 2}=[level distance=1.5cm, sibling distance=1cm]
\tikzstyle{level 3}=[level distance=1.5cm, sibling distance=.8cm]
\begin{tikzpicture}[grow'=up]
\node[leaf] {}
	child {
		node[node] {}        
		child {
			node[node] {}  
			child {
				node[leaf, label=above:{\footnotesize{$1$}}] {}     
				edge from parent [solid]
			}
			child {
				node[leaf, label=above:{\footnotesize{$2$}}] {}        
				edge from parent [solid]
			}
			child {
				node[leaf, label=above:{\footnotesize{$3$}}] {}        
				edge from parent [solid]
			}
			edge from parent [solid] 
		}
		child {
			node[leaf, label=above:{\footnotesize{$4$}}] {}        
			edge from parent [solid]
		}
		child {
			node[leaf, label=above:{\footnotesize{$5$}}] {}        
			edge from parent [solid]
		}
		child {
			node[leaf, label=above:{\footnotesize{$6$}}] {}        
			edge from parent [solid]
		}
		edge from parent [dashed]
	};
\end{tikzpicture}
\caption{An element of $\Tree'_2(6)$.}\label{excoltree}
\end{minipage}
\begin{minipage}{.5\textwidth}
\centering
\tikzstyle{node} = [circle, draw, solid, text centered]
\tikzstyle{leaf} = [circle, minimum width=3pt,fill, inner sep=0pt]
\tikzstyle{level 1}=[level distance=1cm, sibling distance=1cm]
\tikzstyle{level 2}=[level distance=1.5cm, sibling distance=1.8cm]
\tikzstyle{level 3}=[level distance=1.5cm, sibling distance=.8cm]
\begin{tikzpicture}[grow'=up]
\node[leaf] {}
	child {
		node[node] {}        
		child {
			node[node] {}  
			child {
				node[leaf, label=above:{\footnotesize{$1$}}] {}     
				edge from parent [solid]
			}
			child {
				node[leaf, label=above:{\footnotesize{$2$}}] {}        
				edge from parent [solid]
			}
			child {
				node[leaf, label=above:{\footnotesize{$3$}}] {}        
				edge from parent [solid]
			}
			edge from parent   
		}
		child {
			node[node] {}  
			child {
				node[leaf, label=above:{\footnotesize{$4$}}] {}     
				edge from parent [solid]
			}
			child {
				node[leaf, label=above:{\footnotesize{$5$}}] {}        
				edge from parent [solid]
			}
			edge from parent   
		}
		child {
			node[node] {}
			child {
				node[leaf, label=above:{\footnotesize{$6$}}] {}     
				edge from parent [solid]
			}
			child {
				node[leaf, label=above:{\footnotesize{$7$}}] {}        
				edge from parent [solid]
			}
			child {
				node[leaf, label=above:{\footnotesize{$8$}}] {}        
				edge from parent [solid]
			}
			edge from parent 
		}
		edge from parent [dashed]
	};
\end{tikzpicture}
\caption{An element of $\PF'_3(8)$.}\label{excolpf}
\end{minipage}
}
\end{figure}

We will briefly recall the definitions of homotopy algebras and $\infty$-morphisms; see \cite{NotesAlgOps}, \cite{HomAlgOps}, and \cite{AlgOps} for more thorough introductions to the subject. Given a coaugmented, reduced cooperad $\sC$, we will use the following ``pedestrian'' definition of homotopy algebras: a homotopy algebra of type $\sC$ to be an algebra $V$ over ${\Cobar}(\sC)$. That is, we have a map of operads
\begin{align*}
F: {\Cobar}(\sC) \tto \End_V.
\end{align*}
The complicated systems of coherence relations needed in explicit definitions of homotopy algebras are built into the compatibility of the above map with the differentials. This is equivalent \cite{NotesAlgOps} to a coderivation $Q_V$ on $\sC(V)$, the cofree coalgebra generated by $V$ over the cooperad $\sC$, that satisfies the Maurer-Cartan equation (equivalently, $d_V + Q_V$ is a differential on $\sC(V)$).

While there is the natural notion of morphisms of $\sO$-algebras for any operad $\sO$, in this setting we have the richer notion of $\infty$-morphisms. More specifically, an $\infty$-morphism between two homotopy algebras $V$, $W$ of type $\sC$ is a map of dg coalgebras
\begin{align*}
U: (\sC(V), d_V+Q_V) \tto (\sC(W), d_W+Q_W).
\end{align*}
We denote an $\infty$-morphism by $U: V \leadsto W$. An $\infty$-quasi-isomorphism is an $\infty$-morphism such that the linear term $U_{(0)}: V \to W$ is a quasi-isomorphism of complexes. In the event that $V$ and $W$ are $\infty$-quasi-isomorphic, we will say that $V$ and $W$ are homotopy equivalent.

\section{The $2$-colored operad $\Cyl(\sC)$}

As mentioned in the introduction, our ultimate goal is to study homotopy algebras and $\infty$-morphisms using the techniques of operads. In particular, we will define and study a $2$-colored operad $\Cyl(\sC)$ that governs pairs of $\Cobar(\sC)$-algebras and an $\infty$-morphism $V \leadsto W$ (see Proposition \ref{prop:cyl-alg}).

First, given a coaugmented cooperad $\sC$, define the $2$-colored collection $\widetilde{\sC}$ as follows:
\[ \widetilde{\sC}(n,0;\alpha) = \widetilde{\sC}(0,n;\beta) = \susp\sC_\circ(n) \]
\[ \widetilde{\sC}(n,0;\beta) = \sC(n) \]
\[ \widetilde{\sC} = 0 \hspace{.2in} \text{otherwise.} \]

\noindent Then form the free $2$-colored operad $\OP(\widetilde{\sC})$. We think of elements of $\OP(\widetilde{\sC})$ as $2$-colored trees with vertices decorated by elements of $\susp \sC_\circ$ and $\sC$, such that vertices have incoming edges only of a single color, and vertices have input color $\alpha$ and output color $\beta$ exactly when the vertex is decorated by an element of $\sC$.

Note that unlike ${\Cobar}(\sC)$, elements of $\OP(\widetilde{\sC})$ may have mixed-color vertices decorated by unsuspended elements of $\sC$, and since $\sC=\k \oplus \sC_\circ$, in particular may have vertices with a single input and output. We denote these ``trivial vertices'' by $1^{\alpha\beta} \in \OP(\widetilde{\sC})(1,0;\beta)$. This leads us to an alternate notion of degree: given $X \in \OP(\widetilde{\sC})$ (or $X \in {\Cobar}(\sC)$), we will say that the \emph{weight} of $X$, or $\wt(X)$, is the number of internal vertices of $X$ not of the form $1^{\alpha\beta}$ (in ${\Cobar}(\sC)$, weight is just the number of internal vertices). We will say that a map $F$ has weight $m$ if it raises weight by exactly $m$. Note finally that we have ${\Cobar}(\sC) \subseteq \OP(\widetilde{\sC})$ by declaring that an element $X \in {\Cobar}(\sC)$ has edges only of color $\alpha$ or only of color $\beta$; we will denote these assignments by $X^{\alpha}$ and $X^{\beta}$, respectively. Similarly, given $x\in\sC(n)$, we will write $x^{\alpha\beta} \in \OP(\widetilde{\sC})(n,0;\beta)$ to indicate the corolla with $n$ incoming edges of color $\alpha$ and outgoing edge of color $\beta$; this is consistent with our earlier notation for $1^{\alpha\beta} \in \OP(\widetilde{\sC})(1,0;\beta)$. We will often mark vectors with superscripts in this way to clearly indicate their input and output colors.

%Note that unlike ${\Cobar}(\sC)$, elements of $\OP(\widetilde{\sC})$ may have vertices decorated by unsuspended elements of $\sC$, and since $\sC=\k \oplus \sC_\circ$, in particular may have vertices decorated by elements of $\k$. Since all such vertices are proportional, we will use the notation $1^{\alpha\beta} \in \OP(\widetilde{\sC})(1,0;\beta)$ for the $1$-corolla decorated by $1\in \k$. This leads us to an alternate notion of degree: given $X \in \OP(\widetilde{\sC})$ (or $X \in {\Cobar}(\sC)$), we will say that the \emph{weight} of $X$, or $\wt(X)$, is the number of internal vertices of $X$ not of the form $1^{\alpha\beta}$ (in ${\Cobar}(\sC)$, weight is just the number of internal vertices). We will say that a map $F$ has weight $m$ if it raises weight by exactly $m$. Note finally that we have ${\Cobar}(\sC) \subseteq \OP(\widetilde{\sC})$ by declaring that an element $X \in {\Cobar}(\sC)$ has edges only of color $\alpha$ or only of color $\beta$; we will denote these assignments by $X^{\alpha}$ and $X^{\beta}$, respectively. Similarly, given $x\in\sC(n)$, we will write $x^{\alpha\beta} \in \OP(\widetilde{\sC})(n,0;\beta)$ to indicate the corolla with $n$ incoming edges of color $\alpha$ and outgoing edge of color $\beta$; this is consistent with our earlier notation for $1^{\alpha\beta} \in \OP(\widetilde{\sC})(1,0;\beta)$. We will often mark vectors with superscripts in this way to clearly indicate their input and output colors.

On $\OP(\widetilde{\sC})$, define a derivation $\del$ on generators as follows:
\begin{center}
\begin{tabular}{c c c c}
$\del(\susp x^\alpha)$ & $=$ & $\del_{\Cobar} (\susp x)^\alpha$ & $\susp x \in \widetilde{\sC}(n,0;\alpha)= \susp\sC_\circ(n)$ \\
$\del(\susp x^\beta)$ & $=$ & $\del_{\Cobar} (\susp x)^\beta$ & $\susp x \in \widetilde{\sC}(0,n;\beta)= \susp\sC_\circ(n)$ \\
$\displaystyle\del(1^{\alpha\beta})$ & $=$ & $ 0$ & $1 \in \widetilde{\sC}(n,0;\beta)= \sC(n)$ \\
$\displaystyle\del(x^{\alpha\beta})$ & $=$ & $ \del_\sC (x)^{\alpha\beta} + \del'(x^{\alpha\beta})+\del''(x^{\alpha\beta})$ & $1 \neq x \in \widetilde{\sC}(n,0;\beta)= \sC(n)$
\end{tabular}
\end{center}
where $\del'$ is defined by
\begin{align*}
\del'(x^{\alpha\beta}) = \displaystyle \sum_{z \in \Isom( \Tree'_2(n,0;\beta) )} (-1)^{|x_1|} (\bt_z; x_1, \susp x_2)
\end{align*}
with $\bt_z$ a representative of the isomorphism class $z\in \Isom( \Tree'_2(n))$ and $\Delta_{\bt_z}(x)=\sum x_1 \ot x_2$, and where $\del''$ is defined by
\begin{align*}
\del''(x^{\alpha\beta}) = \displaystyle -\sum_k \sum_{z \in \Isom( \PF'_k(n,0;\beta) )} (\bt_z; \susp x_0, x_1, ..., x_k)
\end{align*}
with $\bt_z$ a representative of the isomorphism class $z \in \Isom( \PF'_k(n,0;\beta))$ and $\Delta_{\bt_z}(x)=\sum x_0 \ot x_1 \ot ... \ot x_k$, and where both comultiplications are taken by forgetting the coloring on $\bt_z$. A visual interpretation of $\del(x^{\alpha\beta})$  is found in Figure \ref{scary-diff}.

{
\begin{figure}[h]
\noindent\makebox[\textwidth][c]{
\begin{minipage}{.1\textwidth}
\centering
$\del(x^{\alpha\beta})=$
\end{minipage}
\begin{minipage}{.2\textwidth}
\centering
\tikzstyle{node} = [circle, draw, solid, text centered]
\tikzstyle{leaf} = [circle, minimum width=3pt,fill, inner sep=0pt]
\tikzstyle{level 1}=[level distance=2cm, sibling distance=1cm]
\tikzstyle{level 2}=[level distance=2.2cm, sibling distance=1cm]
\begin{tikzpicture}[grow'=up]
\node[leaf] {}
	child {
		node[node] {\footnotesize{$\del_{\sC}(x)$}}        
		child {
			node[leaf] {}  
			edge from parent [solid] 
		}
		child {
			node[leaf] {}        
			edge from parent [solid]
		}
		child {
			edge from parent [white]
			node[black] {...}
		}
		child {
			node[leaf] {}        
			edge from parent [solid]
		}
		edge from parent [dashed]
	};
\end{tikzpicture}
\end{minipage}
\begin{minipage}{.05\textwidth}
\centering
$+$
\end{minipage}
\begin{minipage}{.2\textwidth}
\centering
$\displaystyle\sum_{\Isom( \Tree'_2(n,0;\beta) )} (-1)^{|x_1|}$
\end{minipage}
\begin{minipage}{.3\textwidth}
\centering
\tikzstyle{node} = [circle, draw, solid, text centered]
\tikzstyle{leaf} = [circle, minimum width=3pt,fill, inner sep=0pt]
\tikzstyle{level 1}=[level distance=1.2cm, sibling distance=1cm]
\tikzstyle{level 2}=[level distance=1.5cm, sibling distance=1cm]
\tikzstyle{level 3}=[level distance=1.5cm, sibling distance=.8cm]
\begin{tikzpicture}[grow'=up]
\node[leaf] {}
	child {
		node[node] {\footnotesize{$x_1$}}     
		child {
			node[node] {\footnotesize{$\susp x_2$}}
			child {
				node[leaf] {}     
				edge from parent 
			}
			child {
				node[leaf] {}        
				edge from parent 
			}
			child {
				edge from parent [white]
				node[black] {...}
			}
			child {
				node[leaf] {}        
				edge from parent 
			}
			edge from parent [solid]
		}
		child {
			node[leaf] {}        
			edge from parent [solid]
		}
		child {
			edge from parent [white]
			node[black] {...}
		}
		child {
			node[leaf] {}        
			edge from parent [solid]
		}
		edge from parent [dashed]
	};
\end{tikzpicture}
\end{minipage}
}
\vspace{3mm}
 
\noindent\makebox[\textwidth][c]{
\begin{minipage}{.2\textwidth}
\centering
$-\displaystyle\sum_k \sum_{\Isom( \PF'_k(n,0;\beta) )}$
\end{minipage}
\begin{minipage}{.35\textwidth}
\centering
\tikzstyle{node} = [circle, draw, solid, text centered]
\tikzstyle{leaf} = [circle, minimum width=3pt,fill, inner sep=0pt]
\tikzstyle{level 1}=[level distance=1.2cm, sibling distance=1cm]
\tikzstyle{level 2}=[level distance=1.5cm, sibling distance=1.8cm]
\tikzstyle{level 3}=[level distance=1.5cm, sibling distance=.8cm]
\begin{tikzpicture}[grow'=up]
\node[leaf] {}
	child {
		node[node] {\footnotesize{$\susp x_0$}}      
		child {
			node[node] {\footnotesize{$x_1$}}
			child {
				node[leaf] {}     
				edge from parent [solid]
			}
			child {
				node[leaf] {}        
				edge from parent [solid]
			}
			child {
				edge from parent [white]
				node[black] {...}
			}
			child {
				node[leaf] {}        
				edge from parent [solid]
			}
			edge from parent   
		}
		child {
			node[] {...}
			edge from parent [white]
		}
		child {
			node[node] {\footnotesize{$x_k$}}
			child {
				node[leaf] {}     
				edge from parent [solid]
			}
			child {
				node[leaf] {}        
				edge from parent [solid]
			}
			child {
				edge from parent [white]
				node[black] {...}
			}
			child {
				node[leaf] {}        
				edge from parent [solid]
			}
			edge from parent 
		}
		edge from parent [dashed]       
	};
\end{tikzpicture}
\end{minipage}
}
\caption{The differential on $\Cyl(\sC)$.}\label{scary-diff}
\end{figure}
}

$\del$ visibly has degree $1$, and so with the following proposition, we see that $\del$ gives $\OP(\widetilde{\sC})$ the structure of a dg operad. Following \cite{OpCobarCyl}, we will call this operad $\Cyl(\sC)$.

\begin{proposition}
$\del^2=0$.
\end{proposition}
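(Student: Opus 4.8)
The plan is to exploit the fact that $\del$ is a degree-$1$ derivation of the free operad $\OP(\widetilde\sC)$. A direct computation with the graded Leibniz rule shows that $\del^2$ is then a degree-$2$ derivation, and since a derivation of a free operad is determined by its values on the generators $\widetilde\sC$, it suffices to verify $\del^2=0$ on each of the four types of generators $\susp x^\alpha$, $\susp x^\beta$, $1^{\alpha\beta}$, and $x^{\alpha\beta}$ (with $1\neq x\in\sC(n)$).

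The first three types are immediate. On the all-$\alpha$ (resp.\ all-$\beta$) generators the definition of $\del$ keeps everything within the sub-operad $\Cobar(\sC)^\alpha$ (resp.\ $\Cobar(\sC)^\beta$), where $\del$ coincides with the colored image of $\del_{\Cobar}$; hence $\del^2(\susp x^\alpha)=(\del_{\Cobar}^2\,\susp x)^\alpha$ and likewise for $\beta$, and these vanish by the classical fact that the cobar differential squares to zero (equivalently, the uncolored analogue of the present statement). For the unit corolla $\del(1^{\alpha\beta})=0$ gives $\del^2(1^{\alpha\beta})=0$ trivially.

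The substance of the proof is the remaining generator $x^{\alpha\beta}$, where $\del=\del_\sC(-)^{\alpha\beta}+\del'+\del''$. I would expand $\del^2(x^{\alpha\beta})=\del(\del_\sC(x)^{\alpha\beta})+\del(\del'(x^{\alpha\beta}))+\del(\del''(x^{\alpha\beta}))$ and sort the resulting terms according to the shape of the underlying $2$-colored tree. Organized this way, the cancellations fall into three families, each reducing to one of the defining properties of the reduced dg cooperad $\sC$. First, the terms that remain single $\alpha\beta$-corollas come only from $\del_\sC\del_\sC$ and vanish because $\del_\sC^2=0$. Second, the terms living on $\Tree'_2$-type trees (one application of $\del_\sC$ together with one splitting) and on $\PF'_k$-type trees (one $\del_\sC$ together with one pitchfork) cancel in matched groups, pairing $\del_\sC$ applied to a comultiplied factor against the comultiplication of $\del_\sC(x)$; this is exactly the statement that $\del_\sC$ is a coderivation for $\Delta$. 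Third, all terms carrying two splittings cancel by coassociativity of $\Delta$: the purely $\alpha$-colored linear chains produced by $\del'$ composed with $\del'$ and with the $\Tree_2$-splitting of the top $\alpha$-vertex cancel exactly as in the ordinary cobar computation.

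The main obstacle is the last, genuinely $2$-colored family of two-splitting terms, which mixes $\del'$ and $\del''$ together with the $\beta$-splitting of the bottom vertex $\susp x_0^\beta$ of a pitchfork. Concretely, applying $\del''$ to the root vertex of a $\del'$-tree and applying $\del'$ to an arm of a $\del''$-pitchfork produce the same family of pitchforks-with-an-extra-$\alpha$-top, and these must be shown to cancel; similarly, the $\Tree_2$-splitting of $\susp x_0^\beta$ must be matched against applying $\del''$ to an arm of a pitchfork, producing nested $\beta$-over-$\beta$ configurations. In each case the two tree shapes coincide and the underlying scalars agree by iterated coassociativity of $\Delta$, so the whole family cancels; the real work—and the only delicate part—is verifying that the signs $(-1)^{|x_1|}$ in $\del'$, the overall sign in $\del''$, and the Koszul signs introduced when the derivation $\del$ passes a suspended factor all conspire to make these cancellations exact. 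Once the sign bookkeeping is in place, $\del^2(x^{\alpha\beta})=0$ follows, completing the verification on generators and hence the proposition.
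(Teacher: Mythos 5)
Your proposal is correct and follows essentially the same route as the paper: reduce to generators, dispose of the single-color and unit corollas by appeal to $\del_{\Cobar}^2=0$, and then organize the mixed-color computation by the decomposition $\del=\del_\sC+\del'+\del''$, with the cancellations grouped by $\del_\sC^2=0$, the coderivation property, coassociativity for $\del'\circ\del'$, and combinatorial/sign identities for the remaining $\del'$--$\del''$ cross terms. The paper's own proof is likewise only a sketch of this same bookkeeping, so nothing further is needed.
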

\begin{proof}
The proof is a technical computation in the same spirit as showing $\del_{\Cobar}^2=0$. It suffices to show that $\del^2=0$ on corollas. Since $\del = \del_{\Cobar}$ on single-color corollas, it remains to justify that $\del^2=0$ on mixed-color corollas; we will give the general ideas behind this computation. Since $\del = \del_\sC + \del' + \del''$, we have that $\del^2=0$ from the following observations:
\begin{enumerate}
\item $\del_\sC^2=0$ because $\del_\sC$ is a differential on $\sC$;
\item $\del_\sC \circ \del' +\del' \circ \del_\sC=\del_\sC \circ \del'' +\del'' \circ \del_\sC=0$ because $\del_\sC$ is as a coderivation;
\item $\del' \circ \del' = 0$ because of coassociativity;
\item $\del' \circ \del'' + \del'' \circ \del' + \del'' \circ \del''= 0$ because of coassociativity and elementary combinatorial identities.
\end{enumerate}
\end{proof}

%coassociativity and the combinatorial fact that the sum over all valid possibilities of $\PF'$ trees inserted into nodal vertices of $\Tree'_2$ trees equals the sum over all valid possibilities of $\Tree'_2$ trees inserted into nodal vertices of $\PF'$ trees.

As mentioned earlier, the significance of $\Cyl(\sC)$ is that it governs pairs of homotopy algebras and $\infty$-morphisms between them, which we will prove in Section 4; for now, we proceed to study $\Cyl(\sC)$ in more depth. Given that $\Cyl(\sC)$ is essentially a $2$-colored modification of ${\Cobar}(\sC)$, one would expect their cohomology to be related somehow. This is indeed the case, at least if we restrict our attention to the weight $0$ components of their respective differentials. The weight $0$ component of $\del_{\Cobar}$ is just $\del_\sC$; explicitly,
\begin{align*}
\del_\sC(\susp x) = -\susp\del_\sC(x)
\end{align*}
for $\susp x \in \susp\sC_\circ$. On $\Cyl(\sC)$, the weight $0$ part of $\del$, to be denoted $\del_0$, is given explicitly by
\begin{center}
\begin{tabular}{c c c c}
$\del_0(\susp x^{\alpha})$ & $=$ & $\del_\sC (\susp x)^\alpha$ & $\susp x \in \widetilde{\sC}(n,0;\alpha)= \susp\sC_\circ(n)$ \\
$\del_0(\susp x^{\beta})$ & $=$ & $\del_\sC (\susp x)^\beta$ & $\susp x \in \widetilde{\sC}(0,n;\beta)= \susp\sC_\circ(n)$ \\
$\displaystyle\del(1^{\alpha\beta})$ & $=$ & $ 0$ & $1 \in \widetilde{\sC}(n,0;\beta)= \sC(n)$ \\
$\del_0(x^{\alpha\beta})$ & $=$ & $\del_\sC (x)^{\alpha\beta} + \del'_0(x^{\alpha\beta}) + \del''_0(x^{\alpha\beta})$ & $1 \neq x \in \widetilde{\sC}(n,0;\beta)= \sC(n)$
\end{tabular}
\end{center}
where
\begin{align*}
\del'_0(x^{\alpha\beta}) = 1^{\alpha\beta} \circ_1 \susp x^\alpha
\end{align*}
and where
\begin{align*}
\del''_0(x^{\alpha\beta}) = - \mu(\susp x^\beta; 1^{\alpha\beta}, ..., 1^{\alpha\beta}).
\end{align*}
It may seem as though we have overrused the notation $\del_\sC$ by now, but all such uses are really just the original $\del_\sC$ acting as a derivation on a free operad, respecting suspensions and/or coloring. A visual representation of the action of $\del_0$ on mixed-color generators is found in Figure \ref{scary-diff-0}.

\begin{figure}[h]
\noindent\makebox[\textwidth][c]{
\begin{minipage}{.1\textwidth}
\centering
$\del_0(x^{\alpha\beta})=$
\end{minipage}
\begin{minipage}{.2\textwidth}
\centering
\tikzstyle{node} = [circle, draw, solid, text centered]
\tikzstyle{leaf} = [circle, minimum width=3pt,fill, inner sep=0pt]
\tikzstyle{level 1}=[level distance=1.7cm, sibling distance=1cm]
\tikzstyle{level 2}=[level distance=2cm, sibling distance=1cm]
\begin{tikzpicture}[grow'=up]
\node[leaf] {}
	child {
		node[node] {\footnotesize{$\del_{\sC}(x)$}}        
		child {
			node[leaf] {}  
			edge from parent [solid] 
		}
		child {
			node[leaf] {}        
			edge from parent [solid]
		}
		child {
			edge from parent [white]
			node[black] {...}
		}
		child {
			node[leaf] {}        
			edge from parent [solid]
		}
		edge from parent [dashed]
	};
\end{tikzpicture}
\end{minipage}
\begin{minipage}{.05\textwidth}
\centering
$+$
\end{minipage}
\begin{minipage}{.18\textwidth}
\centering
\tikzstyle{node} = [circle, draw, solid, text centered]
\tikzstyle{leaf} = [circle, minimum width=3pt,fill, inner sep=0pt]
\tikzstyle{level 1}=[level distance=1cm, sibling distance=1cm]
\tikzstyle{level 2}=[level distance=1.2cm, sibling distance=1cm]
\tikzstyle{level 3}=[level distance=1.5cm, sibling distance=.8cm]
\begin{tikzpicture}[grow'=up]
\node[leaf] {}
	child {
		node[node] {\footnotesize{$1$}}     
		child {
			node[node] {\footnotesize{$\susp x$}}
			child {
				node[leaf] {}     
				edge from parent 
			}
			child {
				node[leaf] {}        
				edge from parent 
			}
			child {
				edge from parent [white]
				node[black] {...}
			}
			child {
				node[leaf] {}        
				edge from parent 
			}
			edge from parent [solid]
		}
		edge from parent [dashed]
	};
\end{tikzpicture}
\end{minipage}
\begin{minipage}{.05\textwidth}
\centering
$-$
\end{minipage}
\begin{minipage}{.32\textwidth}
\centering
\tikzstyle{node} = [circle, draw, solid, text centered]
\tikzstyle{leaf} = [circle, minimum width=3pt,fill, inner sep=0pt]
\tikzstyle{level 1}=[level distance=1cm, sibling distance=1cm]
\tikzstyle{level 2}=[level distance=1.2cm, sibling distance=1.5cm]
\tikzstyle{level 3}=[level distance=1.5cm, sibling distance=.8cm]
\begin{tikzpicture}[grow'=up]
\node[leaf] {}
	child {
		node[node] {\footnotesize{$\susp x$}}      
		child {
			node[node] {\footnotesize{$1$}}
			child {
				node[leaf] {}     
				edge from parent [solid]
			}
			edge from parent   
		}
		child {
			node[node] {\footnotesize{$1$}}
			child {
				node[leaf] {}     
				edge from parent [solid]
			}
			edge from parent   
		}
		child {
			node[] {...}
			edge from parent [white]
		}
		child {
			node[node] {\footnotesize{$1$}}
			child {
				node[leaf] {}     
				edge from parent [solid]
			}
			edge from parent 
		}
		edge from parent [dashed]       
	};
\end{tikzpicture}
\end{minipage}
}
\caption{The weight $0$ component of the differential on $\Cyl(\sC)$.}\label{scary-diff-0}
\end{figure}

From weight considerations (or directly checking), both $\del_\sC^2=0$ and $\del_0^2=0$, so we may consider ${\Cobar} (\sC)$ and $\Cyl(\sC)$ with respect to these simpler differentials. Then we have:

\begin{theorem}\label{incl-q-isom}
The inclusion maps
\begin{center}
\begin{tabular}{c c c c c}
$\iota_\alpha$, $\iota_\beta$ & $:$ & $({\Cobar} (\sC)(n), \del_{\sC})$ & $\tto$ & $ (\Cyl(\sC)(n,0;\beta), \del_0 )$ \\
$\iota_\alpha$ & $:$ & $X$ & $\mapsto$ & $1^{\alpha\beta} \circ_1 X^\alpha$ \\
$\iota_\beta$ & $:$ & $X$ & $\mapsto$ & $\mu(X^\beta; 1^{\alpha\beta}, ..., 1^{\alpha\beta})$
\end{tabular}
\end{center}
are quasi-isomorphisms for all $n \geq 0$, and furthermore, are homotopic.
\end{theorem}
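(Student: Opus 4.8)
The plan is to reduce everything to a purely combinatorial statement about one tree shape at a time, and then analyse that by an explicit ``cut-sweeping'' homotopy. First, using that $\del_0$ is a derivation and $\del_0(1^{\alpha\beta})=0$, one checks in a line that $\del_0(1^{\alpha\beta}\circ_1 X^\alpha)=1^{\alpha\beta}\circ_1(\del_\sC X)^\alpha$, and similarly for $\iota_\beta$; hence both maps are chain maps, and each is injective with image a subcomplex isomorphic to $(\Cobar(\sC)(n),\del_\sC)$. Next I record the structural picture: since no generator of $\widetilde\sC$ has input colour $\beta$ and output colour $\alpha$, along every root-to-leaf path the colour changes from $\alpha$ to $\beta$ exactly once, so the vertices decorated by $\sC$ (the colour-changing vertices) form a cut separating an all-$\alpha$ region on the leaf side from an all-$\beta$ region on the root side. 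Crucially, none of $\del_\sC$, $\del'_0$, $\del''_0$ alters the underlying tree of vertices carrying nontrivial ($\sC_\circ$-)decorations: $\del_\sC$ is internal, while $\del'_0$ and $\del''_0$ only recolour a vertex and insert auxiliary copies of $1^{\alpha\beta}$. Thus $(\Cyl(\sC)(n,0;\beta),\del_0)$ splits as a direct sum over tree shapes $\bt$ (the basis trees of $\Cobar(\sC)(n)$), each summand being a bicomplex with the decoration differential $\del_\sC$ in one direction and the colouring differential $\del'_0+\del''_0$ in the other; the maps $\iota_\alpha,\iota_\beta$ respect this splitting and land in the all-$\alpha$, resp. all-$\beta$, colouring.

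For a fixed $\bt$ the whole content is then the \emph{colouring complex}: the span of all admissible colourings of $\bt$ (equivalently, all cuts, possibly with several incomparable colour-changing vertices), with the decorations carried along as coefficients and differential $\del'_0+\del''_0$. The quasi-isomorphism statement becomes the claim that this colouring complex is contractible onto its all-$\alpha$ vertex. I would prove this by an explicit contracting homotopy that lowers the cut one colour-changing vertex at a time toward the root, modelled on the identity $\del_0(x^{\alpha\beta})=\del_\sC(x)^{\alpha\beta}+\iota_\alpha(\susp x)-\iota_\beta(\susp x)$ that one reads off from the definitions of $\del'_0$ and $\del''_0$ on a single corolla. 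Because this homotopy is defined entirely on the colouring data and merely transports the decorations, it is natural in the coefficients and so anticommutes with $\del_\sC$; consequently it contracts the total bicomplex of each $\bt$-summand onto $\mim(\iota_\alpha)$, giving $\del_0 H+H\del_0=\id-\iota_\alpha r$ with $r\iota_\alpha=\id$. This exhibits $\iota_\alpha$ as a homotopy equivalence, in particular a quasi-isomorphism, for every $n$.

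The remaining ``homotopic'' clause I would obtain from the same sweep. The all-$\beta$ colouring is a second cycle of the colouring complex, cohomologous to the all-$\alpha$ one: fixing a linear extension of the ancestor order on the vertices of $\bt$ and summing, with Koszul signs, the configurations in which exactly one vertex along the resulting maximal chain of cuts is colour-changing while everything below it is $\beta$ and everything above is $\alpha$, one gets an operator $h$ with $\del_0 h+h\del_\sC=\iota_\alpha-\iota_\beta$. Indeed, applying $\del_0$, the $\del'_0$-part of each summand and the $\del''_0$-part of the preceding summand produce the same intermediate cut with opposite signs, so all interior cuts cancel by telescoping and only the two extreme colourings survive, while the internal pieces cancel against $h\del_\sC$. (Alternatively, $\iota_\alpha\simeq\iota_\beta$ follows formally from the contraction above, since $r\iota_\beta=\id$, via the homotopy $H\iota_\beta$.) Either way $\iota_\beta$ is then also a quasi-isomorphism.

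The main obstacle is the construction and verification of these homotopies, which is where the real work lies. First, one must make the contraction of the colouring complex precise for cuts with several incomparable colour-changing vertices: the sweep has to be organised along a fixed linear extension rather than summed over all cover relations, since at a branch vertex the naive sum overcounts the intermediate cuts and the telescoping fails. Second, all of this must be carried out with the Koszul signs coming from the suspensions and from the $(-1)^{|x_1|}$ factors, so that the cross terms cancel in pairs and the homotopy operator genuinely anticommutes with $\del_\sC$; this is the same delicate bookkeeping used to establish $\del_0^2=0$, now pushed one step further.
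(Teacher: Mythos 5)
Your argument is sound and, for the quasi-isomorphism clause, takes a genuinely different route from the paper's. The paper filters ${\Cobar}(\sC)(n)$ and $\Cyl(\sC)(n,0;\beta)$ by (number of $\alpha$-colored edges) minus degree so that on the associated graded only $\del_0'$ survives, then identifies $(\Cyl(\sC)(n,0;\beta),\del_0')$ with a summand of a free $3$-colored operad $\OP(\sQ)$ built on a collection whose mixed-color part is the acyclic two-term complex $\sC_\circ\oplus\susp\sC_\circ$, and computes cohomology via the K\"unneth-type Lemma \ref{colimma} for colimits over connected groupoids; this establishes $\iota_\beta$ as a quasi-isomorphism, and $\iota_\alpha$ is handled only through the homotopy. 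You instead keep $\del_0'+\del_0''$ together, split $\del_0$-complexes by the underlying tree of nontrivial vertices (legitimate, since $\del_0$ preserves that tree), and contract each ``colouring complex'' onto the all-$\alpha$ colouring; this is more elementary (no spectral sequence, no auxiliary operad) and yields more --- an explicit deformation retraction rather than a bare quasi-isomorphism --- at the price of a more delicate combinatorial homotopy in the presence of incomparable cut vertices. Your telescoping operator $h$ for the ``homotopic'' clause is exactly the paper's $h(X)=\sum_i\pm X_i$. Two points you should make explicit in a write-up: (i) the summands of your tree-shape decomposition are coinvariants under $\Aut(\bt)$, so a contraction built from a fixed linear extension of the vertex order is not automatically equivariant and should be averaged over $\Aut(\bt)$ (or over all linear extensions) --- harmless in characteristic $0$, but needed for the sweep to descend; (ii) the Koszul signs making the sweep anticommute with $\del_\sC$ must actually be verified, which is the same bookkeeping the paper itself leaves to the reader when checking $\iota_\alpha-\iota_\beta=\del_0\circ h+h\circ\del_\sC$.
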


\begin{proof}
Given that we will show that $\iota_\alpha$ is homotopic to $\iota_\beta$, it suffices to show that $\iota_\beta$ is a quasi-isomorphism; we will begin with this. Introduce the following filtrations on ${\Cobar}(\sC)(n)$ and $\Cyl(\sC)(n,0;\beta)$:
\begin{center}
$\sF^m {\Cobar}(\sC)(n) = \left\{
\begin{array}{c}
X \in {\Cobar}(\sC)(n) \hspace{2mm} |\\
(\text{the number of edges in $X$}) - |X| \leq m
\end{array}
\right\}$
\end{center}
\begin{center}
$\sF^m \Cyl(\sC)(n,0;\beta) = \left\{
\begin{array}{c}
X \in \Cyl(\sC)(n,0;\beta) \hspace{2mm} |\\
(\text{the number of edges of color $\alpha$ in $X$}) - |X| \leq m
\end{array}
\right\}.$
\end{center}
These filtrations are ascending, cocomplete, and compatible with $\iota_\beta$ (since they are essentially the same filtration). They also respect $\del_\sC$ and $\del_0$; in particular, note that $\del_\sC$ and $\del_0''$ raise internal degree without changing the number of (straight) edges, so they lower the filtration index, while $\del_0'$ raises internal degree and the number of straight edges, so it preserves the filtration index. Consequently, when we consider the associated graded complexes, we have
\begin{center}
\begin{tabular}{c c c}
$\Gr_\sF {\Cobar}(\sC)(n)$ & $=$ & $({\Cobar}(\sC)(n), 0)$ \\
$\Gr_\sF \Cyl(\sC)(n,0;\beta)$ & $=$ & $(\Cyl(\sC)(n,0;\beta), \del_0')$
\end{tabular}
\end{center}
By Appendix A of \cite{NotesAlgOps}, it suffices to show that $\iota_\beta: ({\Cobar}(\sC)(n), 0) \to (\Cyl(\sC)(n,0;\beta), \del_0')$ is a quasi-isomorphism. For the remainder of this first section of the proof, when we refer to those complexes, they will carry those differentials.

For this, we need an auxiliary construction. Define the $3$-colored collection $\sQ$, with colors $\alpha$, $\beta$, $\gamma$, by
\begin{center}
\begin{tabular}{c c c c}
$\sQ(a,0,0;\alpha)$ & $=$ & $\susp\sC_\circ(a)$ & with $\del_\sQ=0$ \\
$\sQ(0,b,c;\beta)$ & $=$ & $\susp\sC_\circ(b+c)$ & with $\del_\sQ=0$ \\
$\sQ(a,0,0;\beta)$ & $=$ & $\sC_\circ(a) \oplus \susp\sC_\circ(a)$ & with $\del_\sQ: x \to \susp x$ \\
$\sQ$ & $=$ & $0$ & otherwise.
\end{tabular}
\end{center}
Note that $H^\bullet(\sQ(0,b,c;\beta)) = H^\bullet(\sQ(b+c,0,0;\alpha)) = \susp\sC_\circ (b+c)$, while $H^\bullet (\sQ(a,0,0;\beta))=0$.

When we form $\OP(\sQ)$, we have that
\begin{align*}
\displaystyle \Cyl(\sC)(n,0;\beta) \cong \bigoplus_{m=0}^n \OP(\sQ)(m,0,n-m;\beta)
\end{align*}
via the (backwards) identification $\OP(\sQ)(m,0,n-m;\beta) \to \Cyl(\sC)(n,0;\beta)$ determined by the following rules. First, send edges of color $\gamma$ to the element $1^{\alpha\beta} \in \Cyl(\sC)(1,0;\beta)$. Then perform the following identifications:
\begin{center}
\begin{tabular}{c c c}
$\susp x^{\alpha} \in \sQ(a,0,0;\alpha)$ & $\mapsto$ & $\susp x^{\alpha} \in \widetilde{\sC}(a,0;\alpha)$ \\
$\susp x^{\beta} \in \sQ(0,b,0;\beta)$ & $\mapsto$ & $\susp x^{\beta} \in \widetilde{\sC}(0,b;\beta)$ \\
$x^{\alpha\beta} \in \sC_\circ \subseteq \sQ(a,0,0;\beta)$ & $\mapsto$ & $x^{\alpha\beta} \in \widetilde{\sC}(a,0;\beta)$\\
$\susp x^{\alpha\beta} \in \susp\sC_\circ \subseteq \sQ(a,0,0;\beta)$ & $\mapsto$ & $1^{\alpha\beta} \circ_1 \susp x^\alpha  \in \Cyl(\sC)(a,0;\beta)$ \\
\end{tabular}
\end{center}
An example of this identification is shown in Figure ~\ref{3-color-version}.

\begin{figure}[h]
\noindent\makebox[\textwidth][c]{
\begin{minipage}{.5\textwidth}
\centering
\tikzstyle{node} = [circle, draw, solid, text centered]
\tikzstyle{leaf} = [circle, minimum width=3pt,fill, inner sep=0pt]
\tikzstyle{level 1}=[level distance=1.5cm, sibling distance=1cm]
\tikzstyle{level 2}=[level distance=1.5cm, sibling distance=2.2cm]
\tikzstyle{level 3}=[level distance=1.5cm, sibling distance=1.5cm]
\tikzstyle{level 4}=[level distance=1.2cm, sibling distance=1cm]
\begin{tikzpicture}[grow'=up]
\node[leaf] {}
	child {
		node[node] {\footnotesize{$\susp x_1$}}        
		child {
			node[node] {\footnotesize{$\susp x_2$}}  
			child {
				node[node] {\footnotesize{$x_3$}}     
				child {
					node[leaf] {}     
					edge from parent [solid]
				}
				child {
					node[leaf] {}     
					edge from parent [solid]
				}
				child {
					node[leaf] {}     
					edge from parent [solid]
				}
				edge from parent 
			}
			child {
				node[node] {\footnotesize{$1$}}
				child {
					node[leaf] {}        
					edge from parent [solid]
				}
				edge from parent
			}
			edge from parent   
		}
		child {
			node[node] {\footnotesize{$1$}}
			child {
				node[node] {\footnotesize{$\susp x_4$}}     
				child {
					node[leaf] {}     
					edge from parent 
				}
				child {
					node[leaf] {}     
					edge from parent 
				}
				child {
					node[leaf] {}     
					edge from parent 
				}
				edge from parent [solid]
			}        
			edge from parent 
		}
		child {
			node[node] {\footnotesize{$1$}}    
			child {
				node[leaf] {}     
				edge from parent [solid]
			}    
			edge from parent 
		}
		edge from parent [dashed]
	};
\end{tikzpicture}
\end{minipage}
\begin{minipage}{.05\textwidth}
\centering
$\longleftrightarrow$
\end{minipage}
\begin{minipage}{.45\textwidth}
\centering\tikzstyle{node} = [circle, draw, solid, text centered]
\tikzstyle{leaf} = [circle, minimum width=3pt,fill, inner sep=0pt]
\tikzstyle{level 1}=[level distance=1.5cm, sibling distance=1cm]
\tikzstyle{level 2}=[level distance=1.5cm, sibling distance=2.2cm]
\tikzstyle{level 3}=[level distance=1.5cm, sibling distance=1cm]
\tikzstyle{level 4}=[level distance=1.2cm, sibling distance=1cm]
\begin{tikzpicture}[grow'=up]
\node[leaf] {}
	child {
		node[node] {\footnotesize{$\susp x_1$}}        
		child {
			node[node] {\footnotesize{$\susp x_2$}}  
			child {
				node[node] {\footnotesize{$x_3$}}     
				child {
					node[leaf] {}     
					edge from parent [solid]
				}
				child {
					node[leaf] {}     
					edge from parent [solid]
				}
				child {
					node[leaf] {}     
					edge from parent [solid]
				}
				edge from parent 
			}
			child {
				node[leaf] {}
				edge from parent [dotted]
			}
			edge from parent   
		}
		child {
			node[node] {\footnotesize{$\susp x_4$}}     
			child {
				node[leaf] {}     
				edge from parent [solid]
			}
			child {
				node[leaf] {}     
				edge from parent [solid]
			}
			child {
				node[leaf] {}     
				edge from parent [solid]
			}
			edge from parent
		}
		child {
			node[leaf] {}    
			edge from parent [dotted]
		}
		edge from parent [dashed]
	};
\end{tikzpicture}
\end{minipage}
}
\caption{An element of $\Cyl(\sC)$ (on the left) identified with an element of $\OP(\sQ)$ (on the right). The dotted lines indicate edges of color $\gamma$.}\label{3-color-version}
\end{figure}

It is not hard to check that this identification is an isomorphism of cochain complexes, and consequently
\begin{align*}
\displaystyle H^\bullet(\Cyl(\sC)(n,0;\beta)) \cong \bigoplus_{m=0}^n H^\bullet (\OP(\sQ)(m,0,n-m;\beta))
\end{align*}

Since $\OP(\sQ)(m, 0, n-m; \beta)$ is $\colim$ from a finite, disjoint union of connected groupoids (specifically, the groupoids consisting of members of isomorphism classes of $3$-colored $n$-labeled planar trees), and carries only the differential structure coming from $\sQ$, Lemma \ref{colimma} applies. In particular, since taking coinvariants is exact when working over a field of characteristic $0$, we have from lemma \ref{colimma} that
\begin{align*}
\displaystyle H^\bullet (\OP(\sQ)(m,0,n-m;\beta)) = \OP(H^\bullet(\sQ))(m,0,n-m;\beta).
\end{align*}
But if $m > 0$, any element of $\OP(\sQ)(m,0,n-m;\beta)$ must contain at least one vertex decorated by an element of $\sQ(a,0,0;\beta)$. Since $H^\bullet (\sQ(a,0,0;\beta))=0$, we have in this case that ${ \OP(H^\bullet(\sQ))(m,0,n-m;\beta)=0}$ also. On the other hand, if $m=0$, all vertices are decorated by elements of $\sQ(0,b,c;\beta)$, and in this case we have $H^\bullet (\sQ(0,b,c;\beta)) = \sQ(0,b,c;\beta)$. Consequently,
\begin{align*}
\displaystyle H^\bullet(\Cyl(\sC)(n,0;\beta)) & \cong H^\bullet (\OP(\sQ)(0,0,n;\beta)) \\
& = \OP(H^\bullet (\sQ))(0,0,n;\beta) \\
& = \OP(\sQ)(0,0,n;\beta).
\end{align*}
Passing back to $\Cyl(\sC)(n,0;\beta)$ via the earlier isomorphism, we see that
\begin{align*}
\OP(\sQ)(0,0,n;\beta) \cong \iota_\beta({\Cobar}(\sC)(n)) \subseteq \Cyl(\sC)(n,0;\beta)
\end{align*}
which shows that $\iota_\beta$ is a quasi-isomorphism; therefore $\iota_\beta$ is a quasi-isomorphism for the original complexes, as desired.

It remains to show that $\iota_\alpha$ is homotopic to $\iota_\beta$ in $\Cyl(\sC)$ with the original differential $\del_0$. Observe that in $\Cyl(\sC)$, the presence of vertices of type $1^{\alpha\beta}$ is determined completely by the coloring of adjacent vertices, and whether they are decorated by suspended vectors or not. Therefore, given $X \in {\Cobar}(\sC)$, we may define $X_i \in \Cyl(\sC)$ by declaring that $X_i$ has the same underlying tree as $X$, it has the same internal vectors as $X$ but that the $i$th (nontrivial) vertex is no longer suspended (using the total order on vertices), that the edges before (nontrivial) vertex $i$ are of color $\beta$ and the edges after are of color $\alpha$ (using the total order on edges), and then finally adding trivial vertices $1^{\alpha\beta}$ and edges of color $\beta$ as necessary to make $X_i$ a valid element of $\Cyl(\sC)$. Figure \ref{coboundary} provides an example of this construction.

\begin{figure}[h]
\noindent\makebox[\textwidth][c]{
\begin{minipage}{.05\textwidth}
\centering
$X=$
\end{minipage}
\begin{minipage}{.4\textwidth}
\centering\tikzstyle{node} = [circle, draw, solid, text centered]
\tikzstyle{leaf} = [circle, minimum width=3pt,fill, inner sep=0pt]
\tikzstyle{level 1}=[level distance=1.5cm, sibling distance=1cm]
\tikzstyle{level 2}=[level distance=1.5cm, sibling distance=1.5cm]
\tikzstyle{level 3}=[level distance=1.5cm, sibling distance=1cm]
\tikzstyle{level 4}=[level distance=1.2cm, sibling distance=1cm]
\begin{tikzpicture}[grow'=up]
\node[leaf] {}
	child {
		node[node] {\footnotesize{$\susp x_1$}}        
		child {
			node[node] {\footnotesize{$\susp x_2$}}  
			child {
				node[node] {\footnotesize{$\susp x_3$}}     
				child {
					node[leaf] {}     
					edge from parent
				}
				child {
					node[leaf] {}     
					edge from parent
				}
				child {
					node[leaf] {}     
					edge from parent
				}
				edge from parent 
			}
			child {
				node[leaf] {}
				edge from parent
			}
			child {
				node[leaf] {}
				edge from parent
			}
			edge from parent   
		}
		child {
			node[leaf] {}    
			edge from parent
		}
		child {
			node[node] {\footnotesize{$\susp x_4$}}     
			child {
				node[leaf] {}     
				edge from parent
			}
			child {
				node[leaf] {}     
				edge from parent
			}
			child {
				node[leaf] {}     
				edge from parent
			}
			edge from parent
		}        
		edge from parent
	};
\end{tikzpicture}
\end{minipage}
\begin{minipage}{.05\textwidth}
\centering
$\longmapsto$
\end{minipage}
\begin{minipage}{.05\textwidth}
\centering
$X_2 =$
\end{minipage}
\begin{minipage}{.4\textwidth}
\centering\tikzstyle{node} = [circle, draw, solid, text centered]
\tikzstyle{leaf} = [circle, minimum width=3pt,fill, inner sep=0pt]
\tikzstyle{level 1}=[level distance=1.5cm, sibling distance=1cm]
\tikzstyle{level 2}=[level distance=1.5cm, sibling distance=1.7cm]
\tikzstyle{level 3}=[level distance=1.5cm, sibling distance=1cm]
\tikzstyle{level 4}=[level distance=1.2cm, sibling distance=1cm]
\begin{tikzpicture}[grow'=up]
\node[leaf] {}
	child {
		node[node] {\footnotesize{$\susp x_1$}}        
		child {
			node[node] {\footnotesize{$x_2$}}  
			child {
				node[node] {\footnotesize{$\susp x_3$}}     
				child {
					node[leaf] {}     
					edge from parent
				}
				child {
					node[leaf] {}     
					edge from parent
				}
				child {
					node[leaf] {}     
					edge from parent
				}
				edge from parent [solid]
			}
			child {
				node[leaf] {}
				edge from parent [solid]
			}
			child {
				node[leaf] {}
				edge from parent [solid]
			}
			edge from parent   
		}
		child {
			node[node] {\footnotesize{$1$}}
			child {
				node[leaf] {}    
				edge from parent [solid]
			}
			edge from parent
		}
		child {
			node[node] {\footnotesize{$1$}}
			child {
				node[node] {\footnotesize{$\susp x_4$}}     
				child {
					node[leaf] {}     
					edge from parent
				}
				child {
					node[leaf] {}     
					edge from parent
				}
				child {
					node[leaf] {}     
					edge from parent
				}
				edge from parent [solid]
			}
			edge from parent
		}        
		edge from parent [dashed]
	};
\end{tikzpicture}
\end{minipage}
}
\caption{The construction of $X_2 \in \Cyl(\sC)$ given $X \in {\Cobar}(\sC)$.}\label{coboundary}
\end{figure}

We may now construct the homotopy between $\iota_\alpha$ and $\iota_\beta$. Given $X=(\bt; \susp x_1,...,\susp x_k) \in {\Cobar}(\sC)$, define $h: {\Cobar}(\sC) \to \Cyl(\sC)$ by

\begin{align*}
h(X) = \sum_{i=1}^k (-1)^{|\susp x_1|+...+|\susp x_{i-1}|} \hspace{1mm} X_i .
\end{align*}
We may think of the sign in the above term coming from the suspension decorating the $i$th nodal vertex $x_i$ ``jumping over'' the vertices $\susp x_1, ..., \susp x_{i-1}$ to leave the tree. Since $\del_\sC (\susp x) = - \susp \del_\sC (x)$ for $x \in \sC$, we have that $\del_\sC \circ h + h \circ \del_\sC = 0$. It is also easy to check that $\iota_\alpha - \iota_\beta = (\del_0' + \del_0'') \circ h$; $\del_0'$ applied to the the first term of $h(X)$ yields $\iota_\alpha(X)$, $\del_0''$ applied to the last term of $h(X)$ yields $-\iota_\beta (X)$ (the sign from $h$ will cancel with the sign coming from $\del_0''$ ``jumping over'' the nontrivial vertices before the final vertex $x_k$), and all middle terms cancel from similar sign considerations. We therefore have in general that $\iota_\alpha - \iota_\beta = \del_0 \circ h + h \circ \del_\sC$, which shows that $\iota_\alpha$ and $\iota_\beta$ are homotopic, which completes the proof.

%Now, given a $\del_\sC$-cocycle $X=(\bt; \susp x_1,...,\susp x_k) \in {\Cobar}(\sC)$, we can construct an explicit coboundary to show that $\iota_\alpha(X)$ and $\iota_\beta(X)$ are cohomologous:
%
%\begin{center}
%\begin{tabular}{c c l}
%$\iota_\alpha(X) - \iota_\beta(X)$ & $=$ & $1^{\alpha\beta} \circ_1 X^\alpha - \mu(X^\beta; 1^{\alpha\beta}, ..., 1^{\alpha\beta})$ \\
% & $=$ & $\displaystyle\del_0 \left(\sum_{i=1}^k (-1)^{|\susp x_1|+...+|\susp x_{i-1}|}X_i\right)$
%\end{tabular}
%\end{center}
%The sign in the above equation will cancel with the same sign term that comes from $\del_0$ ``jumping over'' the nontrivial vertices $\susp x_1, ..., \susp x_{i-1}$ to act on the mixed vertex decorated by $x_i$. Otherwise, the above formula is straightforward to check. We therefore have that the image of $\iota_\alpha$ is cohomologous to the image of $\iota_\beta$, which completes the proof.

\end{proof}

In fact, Theorem \ref{incl-q-isom} is true with respect to the full differentials on ${\Cobar}(\sC)$ and $\Cyl(\sC)$, not simply the weight $0$ parts.

\begin{corollary}\label{full-incl-q-isom}

The inclusion maps
\begin{center}
\begin{tabular}{c c c c c}
$\iota_\alpha$, $\iota_\beta$ & $:$ & $({\Cobar} (\sC)(n), \del_{{\Cobar}})$ & $\tto$ & $ (\Cyl(\sC)(n,0;\beta), \del )$ \\
$\iota_\alpha$ & $:$ & $X$ & $\mapsto$ & $1^{\alpha\beta} \circ_1 X^\alpha$ \\
$\iota_\beta$ & $:$ & $X$ & $\mapsto$ & $\mu(X^\beta; 1^{\alpha\beta}, ..., 1^{\alpha\beta})$
\end{tabular}
\end{center}
are quasi-isomorphisms for all $n \geq 0$, and furthermore, are homotopic.

\end{corollary}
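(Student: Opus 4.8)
The plan is to upgrade Theorem~\ref{incl-q-isom} by a standard filtration/perturbation argument, using weight as the filtration parameter. First observe that $\iota_\alpha$ and $\iota_\beta$ are genuinely chain maps for the \emph{full} differentials: since $\del$ restricts to $\del_{\Cobar}$ on the monochromatic subspaces (by definition $\del(\susp x^\alpha)=\del_{\Cobar}(\susp x)^\alpha$ and likewise for $\beta$, and these subspaces are $\del$-closed) and $\del(1^{\alpha\beta})=0$, the derivation property gives $\del\,\iota_\beta(X)=\mu((\del_{\Cobar}X)^\beta;1^{\alpha\beta},\dots,1^{\alpha\beta})=\iota_\beta(\del_{\Cobar}X)$, and similarly $\del\,\iota_\alpha(X)=1^{\alpha\beta}\circ_1(\del_{\Cobar}X)^\alpha=\iota_\alpha(\del_{\Cobar}X)$. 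Thus ``homotopic'' is meaningful, and since $\iota_\alpha,\iota_\beta$ only adjoin trivial vertices, both are weight-preserving.

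Next I would filter both complexes by weight. Put $\sF^p\,{\Cobar}(\sC)(n)$ (resp.\ $\sF^p\,\Cyl(\sC)(n,0;\beta)$) equal to the span of elements of weight $\geq p$; this is a decreasing filtration respected by the differentials, because $\del_{\Cobar}-\del_\sC$ and $\del-\del_0$ strictly raise weight while $\del_\sC,\del_0$ preserve it. Because $\sC$ is reduced, every nontrivial vertex is at least binary, so for fixed $n$ the weight is bounded; hence the filtration is bounded and the associated spectral sequences converge. On the associated graded the induced differentials are exactly $\del_\sC$ and $\del_0$, and $\iota_\alpha,\iota_\beta$ reduce to the maps of Theorem~\ref{incl-q-isom}. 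That theorem says these are quasi-isomorphisms, so they induce isomorphisms on $E_1$; by the comparison theorem for convergent spectral sequences, $\iota_\alpha$ and $\iota_\beta$ are quasi-isomorphisms for the full differentials.

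For the homotopy statement I would perturb the weight-$0$ homotopy $h$ of Theorem~\ref{incl-q-isom}. Recall $h$ is weight-preserving of degree $-1$ with $\iota_\alpha-\iota_\beta=\del_0\circ h+h\circ\del_\sC$. I seek $H=h+h_1+h_2+\cdots$, where $h_j$ raises weight by $j$, solving $\iota_\alpha-\iota_\beta=\del\circ H+H\circ\del_{\Cobar}$. Collecting terms by weight, the weight-$k$ equation reads $\del_0 h_k+h_k\del_\sC=R_k$, where $R_k$ depends only on $h_0,\dots,h_{k-1}$ and the weight-raising parts of $\del$ and $\del_{\Cobar}$; a short check shows each $R_k$ is a cocycle for the weight-$0$ differential on the relevant hom-complex. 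The construction terminates after finitely many steps since weight is bounded.

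The hard part will be the solvability at each inductive stage, i.e.\ showing that $R_k$ is not merely a cocycle but a coboundary for the weight-$0$ hom-differential. I expect to settle this by extracting from the proof of Theorem~\ref{incl-q-isom} an explicit strong deformation retract of $(\Cyl(\sC)(n,0;\beta),\del_0)$ onto $\iota_\beta\big({\Cobar}(\sC)(n)\big)$ — the contracting homotopy being induced, through the $3$-colored model $\OP(\sQ)$ and Lemma~\ref{colimma}, by the acyclicity of $\sQ(a,0,0;\beta)$ witnessed by $x\mapsto\susp x$ — and then applying the Basic Perturbation Lemma to the perturbation $\del-\del_0$, which raises the bounded weight filtration and so makes the perturbation series finite. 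This produces $H$ in closed form and simultaneously re-proves the quasi-isomorphism claim; the one delicate point of bookkeeping is to confirm that the perturbed contraction data indeed interpolates precisely $\iota_\alpha$ and $\iota_\beta$, rather than yielding some other homotopy equivalence.
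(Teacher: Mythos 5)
Your treatment of the quasi-isomorphism claim is exactly the paper's: filter by weight, pass to the associated graded, and reduce to Theorem \ref{incl-q-isom}. The gap is in the homotopy claim. Your order-by-order construction of $H=h+h_1+h_2+\cdots$ requires at each stage that the obstruction $R_k$ be not merely a cocycle but a coboundary in $\Hom({\Cobar}(\sC)(n),\Cyl(\sC)(n,0;\beta))$ with the weight-zero differential. That complex has nonzero $H^0$ (over a field it is the space of degree-preserving maps $H^\bullet({\Cobar}(\sC)(n))\to H^\bullet(\Cyl(\sC)(n,0;\beta))$, and both cohomologies are nonzero), so there is no a priori reason the induction continues; an extra input is needed to kill the obstruction class. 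Your proposed fix via the Basic Perturbation Lemma does not supply it: the BPL perturbs the contraction data adapted to $\iota_\beta$ and yields a homotopy inverse $p^\infty$ of $\iota_\beta$ for the full differentials, but to conclude $\iota_\alpha\simeq\iota_\beta$ you would still need $p^\infty\circ\iota_\alpha\simeq \mathrm{id}_{{\Cobar}(\sC)(n)}$, which is equivalent to the statement you are proving. What you call ``one delicate point of bookkeeping'' is in fact the entire content of the claim, and it is left unresolved.

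The paper closes this with a much more elementary device: a chain map $\Pi:\Cyl(\sC)(n,0;\beta)\to{\Cobar}(\sC)(n)$, defined for the \emph{full} differentials, which annihilates any tree containing a nontrivial mixed-color vertex and otherwise recolors everything to $\alpha$ and deletes the trivial vertices $1^{\alpha\beta}$. One checks directly that $\Pi$ is a cochain map (the terms of $\del(x^{\alpha\beta})$ surviving $\Pi$ are exactly $\del_0'(x^{\alpha\beta})$ and $\del_0''(x^{\alpha\beta})$, and these cancel under $\Pi$) and that $\Pi\circ\iota_\alpha=\Pi\circ\iota_\beta=1$. Since $\iota_\beta$ is a quasi-isomorphism, $H^\bullet(\Pi)$ is its inverse, hence $H^\bullet(\iota_\alpha)=H^\bullet(\iota_\beta)$; over a field, two chain maps inducing the same map on cohomology are homotopic. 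If you want to salvage your inductive scheme, the existence of $\Pi$ is precisely the tool that would let you show each obstruction class vanishes, but as written your argument does not get there.
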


\begin{proof}

The argument that $\iota_\beta$ is a quasi-isomorphism is very similar, but requires an initial modification. First filter $\Cyl(\sC)(n,0;\beta)$ and ${\Cobar}(\sC)(n)$ by weight and form the associated graded complexes; this then gives us the exact situation of Theorem \ref{incl-q-isom}, and the result holds.

A different argument is needed to show that $\iota_\alpha$ and $\iota_\beta$ are homotopic. For this we introduce the map $\Pi: \Cyl(\sC)(n,0;\beta) \tto {\Cobar}(\sC)(n)$ defined as follows. If $X \in \Cyl(\sC)(n,0;\beta)$ contains any nontrivial mixed-color vertices (that is, vertices decorated by elements of $\sC_\circ$), $\Pi(X)=0$. Otherwise, define $\Pi(X)$ by changing all edges to color $\alpha$ and delete all trivial mixed vertices $1^{\alpha\beta}$, merging the adjacent edges; the result is an element of ${\Cobar}(\sC)$ because $X$ contained no nontrivial mixed vertices. Figure \ref{projection-ex} gives an example of this. It is easy to check that $\Pi$ is a map of cochain complexes and that $\Pi$ is a one-sided inverse to both $\iota_\alpha$ and $\iota_\beta$:
$$\Pi \circ \iota_\alpha = 1_{{\Cobar}(\sC)} = \Pi \circ \iota_\beta .$$

\begin{figure}[h]
\noindent\makebox[\textwidth][c]{
\begin{minipage}{.05\textwidth}
\centering
$\Pi :$
\end{minipage}
\begin{minipage}{.4\textwidth}
\centering\tikzstyle{node} = [circle, draw, solid, text centered]
\tikzstyle{leaf} = [circle, minimum width=3pt,fill, inner sep=0pt]
\tikzstyle{level 1}=[level distance=1.2cm, sibling distance=1cm]
\tikzstyle{level 2}=[level distance=1.5cm, sibling distance=3cm]
\tikzstyle{level 3}=[level distance=1.5cm, sibling distance=1cm]
\tikzstyle{level 4}=[level distance=1.2cm, sibling distance=1cm]
\begin{tikzpicture}[grow'=up]
\node[leaf] {}
	child {
		node[node] {\footnotesize{$\susp x_1$}}        
		child {
			node[node] {\footnotesize{$1$}}  
			child {
				node[node] {\footnotesize{$\susp x_2$}}     
				child {
					node[leaf] {}     
					edge from parent
				}
				child {
					node[leaf] {}
					edge from parent [solid]
				}
				child {
					node[leaf] {}
					edge from parent [solid]
				}
					edge from parent [solid]
			}
			edge from parent   
		}
		child {
			node[node] {\footnotesize{$\susp x_3$}}     
			child {
				node[node] {\footnotesize{$1$}}
				child {
					node[leaf] {}    
					edge from parent [solid]
				}  
				edge from parent
			}
			child {
				node[node] {\footnotesize{$1$}}
				child {
					node[leaf] {}    
					edge from parent [solid]
				}  
				edge from parent
			}
			child {
				node[node] {\footnotesize{$1$}}
				child {
					node[leaf] {}    
					edge from parent [solid]
				}  
				edge from parent
			}
			edge from parent
		}        
		edge from parent [dashed]
	};
\end{tikzpicture}
\end{minipage}
\begin{minipage}{.05\textwidth}
\centering
$\longmapsto$
\end{minipage}
\begin{minipage}{.4\textwidth}
\centering\tikzstyle{node} = [circle, draw, solid, text centered]
\tikzstyle{leaf} = [circle, minimum width=3pt,fill, inner sep=0pt]
\tikzstyle{level 1}=[level distance=1.5cm, sibling distance=1cm]
\tikzstyle{level 2}=[level distance=2.4cm, sibling distance=3cm]
\tikzstyle{level 3}=[level distance=1.5cm, sibling distance=1cm]
\begin{tikzpicture}[grow'=up]
\node[leaf] {}
	child {
		node[node] {\footnotesize{$\susp x_1$}}        
		child {
			node[node] {\footnotesize{$\susp x_2$}}     
			child {
				node[leaf] {}     
				edge from parent
			}
			child {
				node[leaf] {}
				edge from parent [solid]
			}
			child {
				node[leaf] {}
				edge from parent [solid]
			}
			edge from parent   
		}
		child {
			node[node] {\footnotesize{$\susp x_3$}}     
			child {
				node[leaf] {}     
				edge from parent
			}
			child {
				node[leaf] {}
				edge from parent [solid]
			}
			child {
				node[leaf] {}
				edge from parent [solid]
			}
			edge from parent   
		}
		edge from parent
	};
\end{tikzpicture}
\end{minipage}
}
\caption{A nontrivial example of the map $\Pi: \Cyl(\sC)(n,0;\beta) \tto {\Cobar}(\sC)(n)$.}\label{projection-ex}
\end{figure}

\noindent Since we already know that $\iota_\beta$ is a quasi-isomorphism, it follows that $\iota_\alpha$ and $\iota_\beta$ induce the same map on cohomology, and therefore are homotopic. 

\end{proof}

\section{Derivations and derived automorphisms of $\Cyl(\sC)$}

We now focus on derivations of operads \cite[Section 6.1]{NotesAlgOps}. The space of derivations of an operad is denoted $\Der(\sO)$, and is a cochain complex with differential given by the commutator bracket with the internal differential $\del_\sO$. With this in mind, we turn our attention to the dg Lie algebras $\Der({\Cobar}(\sC))$ and $\Der(\Cyl(\sC))$, and investigate how they relate to each other. Our ultimate goal is to show that given any derivation of ${\Cobar}(\sC)$, we can extend it to a derivation of $\Cyl(\sC)$; the ramifications of this in terms of $\infty$-morphisms will be discussed in the next section.

We first state and prove a technical lemma, to be used several times in the remainder of the paper.

\begin{lemma}\label{tech-lemma}
Let $\Hom(\widetilde{\sC}, \Cyl(\sC))$ be the cochain complex of maps of colored collections, with differential
\begin{align*}
\del(F) = \del_0 \circ F - (-1)^{|F|} F \circ \del_0
\end{align*}
for $F \in \Hom(\widetilde{\sC}, \Cyl(\sC))$. Define the cochain complex $\Hom(\susp \sC_\circ, \Cobar(\sC))$ similarly, with differential
\begin{align*}
\del(F) = \del_{\sC} \circ F - (-1)^{|F|} F \circ \del_{\sC}
\end{align*}
Then the maps
\begin{center}
\begin{tabular}{c c c c}
$\res_\alpha, \res_\beta :$ & $\Hom(\widetilde{\sC}, \Cyl(\sC))$ & $\tto$ & $\Hom(\susp \sC_\circ, \Cobar(\sC))$
\end{tabular}
\end{center}
given by restricting to a single color $\alpha$ or $\beta$ are quasi-isomorphisms of complexes.
\end{lemma}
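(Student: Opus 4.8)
The plan is to decompose each Hom-complex along the generating slots of its source, to recognize $\res_\alpha$ and $\res_\beta$ as projections, and then to reduce the statement to the acyclicity of a mapping cone built from the quasi-isomorphisms $\iota_\alpha,\iota_\beta$ of Theorem~\ref{incl-q-isom}. Since a map of colored collections out of $\widetilde\sC$ is exactly a triple of components on the three generating slots, I would first write, as graded vector spaces,
\[
\Hom(\widetilde\sC,\Cyl(\sC)) \;=\; A \oplus B \oplus M,
\]
where $A=\Hom(\susp\sC_\circ,\Cyl(\sC)(\blank,0;\alpha))$, $B=\Hom(\susp\sC_\circ,\Cyl(\sC)(0,\blank;\beta))$ and $M=\Hom(\sC_\circ,\Cyl(\sC)(\blank,0;\beta))$. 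Since every operation with output color $\alpha$ is forced to be monochromatic, $\Cyl(\sC)(\blank,0;\alpha)\cong(\Cobar(\sC),\del_\sC)$ via $X\mapsto X^\alpha$, and likewise $\Cyl(\sC)(0,\blank;\beta)\cong(\Cobar(\sC),\del_\sC)$ via $X\mapsto X^\beta$; under these isomorphisms $A$ and $B$ are each identified with the target $\Hom(\susp\sC_\circ,\Cobar(\sC))$, and $\res_\alpha$, $\res_\beta$ are precisely the projections onto the $A$- and $B$-summands. Because $\Cyl(\sC)$ is free, the differential $\del(F)=\del_0\circ F-(-1)^{|F|}F\circ\del_0$ is computed by extending $F$ to a derivation $\widehat F$ and taking the graded commutator with $\del_0$.

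Next I would read off the block structure of $\del$ in the ordering $(A,B,M)$. Evaluating $\del(F)$ on an $\alpha$-generator returns a monochromatic $\alpha$-output and involves only the $A$-part of $F$, and symmetrically for $\beta$-generators, whereas on a mixed generator $x^{\alpha\beta}$ the value lands in $\Cyl(\sC)(\blank,0;\beta)$ and mixes all three components: the cross terms appear exactly because $\widehat F$ can act on the monochromatic subtrees $\susp x^\alpha$ and $\susp x^\beta$ occurring in $\del'_0(x^{\alpha\beta})=1^{\alpha\beta}\circ_1\susp x^\alpha$ and $\del''_0(x^{\alpha\beta})=-\mu(\susp x^\beta;1^{\alpha\beta},\dots,1^{\alpha\beta})$. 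Hence $\del$ is block lower-triangular: $M$ is a subcomplex, $\res_\alpha,\res_\beta$ are chain maps, and the off-diagonal pieces are maps $\phi_A\colon A\to M$ and $\phi_B\colon B\to M$. The key computation is that, under the identifications above (and the evident desuspension $\susp\sC_\circ\cong\sC_\circ$), $\phi_A$ and $\phi_B$ are nothing but post-composition with $\iota_\alpha$ and $\iota_\beta$, since $\phi_A(F)(x^{\alpha\beta})=\pm\,1^{\alpha\beta}\circ_1F^\alpha(\susp x)=\pm\,\iota_\alpha\!\big(F^\alpha(\susp x)\big)$ and $\phi_B(F)(x^{\alpha\beta})=\pm\,\mu\!\big(F^\beta(\susp x);1^{\alpha\beta},\dots,1^{\alpha\beta}\big)=\pm\,\iota_\beta\!\big(F^\beta(\susp x)\big)$.

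To finish, observe that $\ker\res_\alpha=B\oplus M$ is a subcomplex, giving a short exact sequence $0\to B\oplus M\to\Hom(\widetilde\sC,\Cyl(\sC))\xrightarrow{\res_\alpha}A\to0$, so $\res_\alpha$ is a quasi-isomorphism iff $(B\oplus M,\del)$ is acyclic. But $(B\oplus M,\del)$ is the mapping cone of $\phi_B\colon B\to M$, hence acyclic iff $\phi_B$ is a quasi-isomorphism; since $\phi_B$ is post-composition with $\iota_\beta$, which is a quasi-isomorphism by Theorem~\ref{incl-q-isom}, and post-composition with a quasi-isomorphism induces a quasi-isomorphism of Hom-complexes over a field (where $\Hom(\sC_\circ,\blank)$ preserves quasi-isomorphisms), $\phi_B$ is a quasi-isomorphism and the claim for $\res_\alpha$ follows. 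The symmetric argument with the mapping cone of $\phi_A$ and the quasi-isomorphism $\iota_\alpha$ yields $\res_\beta$.

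The step I expect to be the main obstacle is the careful treatment of the operadic unit $1^{\alpha\beta}$ and the arity-one part. The mixed slot $\widetilde\sC(\blank,0;\beta)=\sC$ contains the coaugmentation $\k\cdot1$, and a derivation rescaling $1^{\alpha\beta}$ would contribute an extraneous degree-zero class not seen by $\susp\sC_\circ$ on the target side; one must therefore work with the coaugmentation coideal $\sC_\circ$, so that $M=\Hom(\sC_\circ,\Cyl(\sC)(\blank,0;\beta))$ and $\phi_A,\phi_B$ become genuine quasi-isomorphisms rather than quasi-isomorphisms onto a proper retract. Checking that this unit direction is correctly excluded, and that the signs together with the multiplicity of units appearing in $\del''_0$ make $\phi_A,\phi_B$ exactly post-composition with $\iota_\alpha,\iota_\beta$, is the only genuinely delicate bookkeeping; everything else is formal once Theorem~\ref{incl-q-isom} is in hand.
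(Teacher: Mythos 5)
Your proof follows essentially the same route as the paper's: the same three-summand decomposition of $\Hom(\widetilde{\sC},\Cyl(\sC))$ by generator color, the same identification of the off-diagonal pieces of the differential with post-composition by $\iota_\alpha$ and $\iota_\beta$, and the same reduction to Theorem~\ref{incl-q-isom} via exactness of $\Hom_{S_n}$ in characteristic zero; the only difference is that you unpack the ``cylinder-type construction'' lemma the paper cites from \cite{DefComHI} into an explicit short-exact-sequence and mapping-cone argument. Your closing caveat about the unit $1^{\alpha\beta}$ in the mixed slot is well taken --- the paper keeps all of $\sC(n)$ there and quietly sets $\del(1^{\alpha\beta})=0$ --- and your resolution (working with the coaugmentation coideal $\sC_\circ$, which is what happens automatically at the filtration levels $\sF_m$, $m\geq 1$, where the lemma is actually applied) is the right one.
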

\begin{remark}
By $F \circ \del_0$, we mean that $F \in \Hom(\widetilde{\sC}, \Cyl(\sC))$ acts on the nontrivial vertices that are present after applying $\del_0$. Explicitly, for a mixed-color generator $x^{\alpha\beta} \in \sC$,
\begin{align*}
(F \circ \del_0 )(x^{\alpha\beta}) = F(\del_\sC (x) ^{\alpha\beta}) + 1^{\alpha\beta} \circ_1 F (\susp x^\alpha) - \mu (F(\susp x^\beta); 1^{\alpha\beta}, ..., 1^{\alpha\beta} )
\end{align*}
\end{remark}
\begin{proof}
We restrict our attention to level $n$, and decompose $\Hom(\widetilde{\sC}, \Cyl(\sC))$ into subspaces (not subcomplexes) based on how a derivation acts on different color generators:
\begin{align*}
\Hom(\susp\sC_\circ(n), {\Cobar}(\sC)(n))^\alpha \oplus \Hom(\sC(n), \Cyl(\sC)(n,0;\beta))^{\alpha\beta} \oplus \Hom(\susp\sC_\circ(n), {\Cobar}(\sC)(n))^\beta.
\end{align*}
Here, the first summand gives the action of a derivation on corollas purely of color $\alpha$, the second summand on mixed-color corollas, and the third summand on corollas of color $\beta$; the superscripts make this explicit. Before we can state how the differential structure respects this decomposition, we need to recall the earlier maps
\begin{center}
\begin{tabular}{c c c c c}
$\iota_\alpha$, $\iota_\beta$ & $:$ & ${\Cobar} (\sC)(n)$ & $\tto$ & $ \Cyl(\sC)(n,0;\beta)$ \\
$\iota_\alpha$ & $:$ & $X$ & $\mapsto$ & $1^{\alpha\beta} \circ_1 X^\alpha$ \\
$\iota_\beta$ & $:$ & $X$ & $\mapsto$ & $\mu(X^\beta; 1^{\alpha\beta}, ..., 1^{\alpha\beta})$
\end{tabular}
\end{center}
and use them to define new, degree $1$ maps:
\begin{center}
\begin{tabular}{c}
$\incl_\alpha, \incl_\beta : \Hom(\susp\sC_\circ(n), {\Cobar}(\sC)(n)) \tto \Hom(\sC(n), \Cyl(\sC)(n,0;\beta))$ \vspace{2mm} \\
$\incl_\alpha (F)(x) = (-1)^{|F|}\iota_\alpha(F(\susp x))$ \vspace{2mm} \\
$\incl_\beta (F)(x) = (-1)^{|F|}\iota_\beta(F(\susp x))$
\end{tabular}
\end{center}
for $F \in \Hom(\susp\sC_\circ(n), {\Cobar}(\sC)(n))$ and $x \in \sC(n)$. It is then straightforward to check that with respect to the above decomposition of $\Hom(\widetilde{\sC}, \Cyl(\sC))$, $\del$ acts as follows:
\begin{align*}
\del(F+F'+F'') = \del(F) - \incl_\alpha(F) + \partial(F') + \incl_\beta(F'') + \del(F'')
\end{align*}
where
$$F \in \Hom(\susp\sC_\circ(n), {\Cobar}(\sC)(n))^\alpha$$
$$F' \in \Hom(\sC(n), \Cyl(\sC)(n,0;\beta))^{\alpha\beta}$$
$$F'' \in \Hom(\susp\sC_\circ(n), {\Cobar}(\sC)(n))^\beta$$ and where
\begin{center}
\begin{tabular}{c}
$\del(F)=\del_{\sC} \circ F - (-1)^{|F|} F \circ \del_{\sC}$ \vspace{2mm} \\
$\del(F')=\del_0 \circ F' - (-1)^{|F'|} F' \circ \del_0$ \vspace{2mm} \\
$\del(F'')=\del_{\sC} \circ F'' - (-1)^{|F''|} F'' \circ \del_{\sC}$.
\end{tabular}
\end{center}
As a cochain complex, $\Hom(\widetilde{\sC}, \Cyl(\sC))$ is therefore a ``cylinder-type construction'' as described in \cite[Appendix A]{DefComHI}, and the maps $\res_\alpha$ and $\res_\beta$ are the natural projections onto the first and third summands. By the same reference, it is enough to show that the maps
\begin{align*}
\susp^{-1}\incl_\alpha, \susp^{-1}\incl_\beta : \Hom(\susp\sC_\circ(n), {\Cobar}(\sC)(n)) \tto \susp^{-1}\Hom(\sC(n), \Cyl(\sC)(n,0;\beta))
\end{align*}
are quasi-isomorphisms. But this is precisely the situation obtained by applying the functor $\Hom (\susp\sC_{\circ}, - )$ to the maps
\begin{align*}
\iota_\alpha, \iota_\beta : {\Cobar} (\sC)(n) \tto \Cyl(\sC)(n,0;\beta)
\end{align*}
and we know those maps are quasi-isomorphisms from Theorem \ref{incl-q-isom}. Since $\Hom_{S_n}$ is exact when working over a field of characteristic $0$, $\incl_\alpha$ and $\incl_\beta$ are also quasi-isomorphisms. From the results of \cite[Appendix A]{DefComHI}, we conclude that the maps $\res_\alpha$, $\res_\beta$ are quasi-isomorphisms.
\end{proof}

\begin{proposition}\label{tech-lemma-filter}
Introduce the following descending filtration on $\Hom(\widetilde{\sC}, \Cyl(\sC))$:
\begin{align*}
\sF_m \Hom(\widetilde{\sC}, \Cyl(\sC)) = \{ F \in \Hom(\widetilde{\sC}, \Cyl(\sC)) \hspace{2mm} | \hspace{2mm} \wt(F) \geq m \}.
\end{align*}
Introduce the same filtration on $\Hom(\susp\sC_\circ, {\Cobar}(\sC))$. These filtrations are complete and compatible with the appropriate differentials and the restriction maps $\res_\alpha$, $\res_\beta$. Then
\begin{center}
\begin{tabular}{c c c c}
$\res_\alpha, \res_\beta :$ & $\sF_m \Hom(\widetilde{\sC}, \Cyl(\sC))$ & $\tto$ & $\sF_m \Hom(\susp \sC_\circ, \Cobar(\sC))$
\end{tabular}
\end{center}
remain quasi-isomorphisms.
\end{proposition}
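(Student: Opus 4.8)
The plan is to exploit the fact that, unlike the full differential on $\Cyl(\sC)$, the weight-zero differentials governing both complexes of Lemma~\ref{tech-lemma} preserve weight \emph{exactly}. Consequently the weight filtration does not merely filter these complexes---it splits them---and the proposition reduces to Lemma~\ref{tech-lemma} applied one weight at a time.

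First I would check that $\del_0$ (and $\del_\sC$) are weight-preserving. Reading off the formulas for $\del_0$, the summand $\del_\sC(x)^{\alpha\beta}$ is the internal differential of $\sC$ and leaves the number of nontrivial vertices unchanged, while $\del'_0(x^{\alpha\beta}) = 1^{\alpha\beta}\circ_1 \susp x^\alpha$ and $\del''_0(x^{\alpha\beta}) = -\mu(\susp x^\beta; 1^{\alpha\beta},\dots,1^{\alpha\beta})$ each replace a single nontrivial vertex by a tree containing exactly one nontrivial vertex, the newly created $1^{\alpha\beta}$ vertices all being trivial. Hence $\wt(\del_0 X)=\wt(X)$, and likewise for $\del_\sC$ on $\Cobar(\sC)$. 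It then follows from $\del(F)=\del_0\circ F-(-1)^{|F|}F\circ\del_0$ that $\del$ preserves the weight of maps: both composites raise weight by $\wt(F)$, since pre- or post-composing a weight-$m$ map with a weight-$0$ map again yields a weight-$m$ map (here one uses the interpretation of $F\circ\del_0$ from the Remark, that $F$ acts on the single nontrivial vertex surviving in each term of $\del_0$). The same holds for the differential on $\Hom(\susp\sC_\circ,\Cobar(\sC))$.

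Next I would record that this upgrades the weight filtration to an honest grading of each Hom-complex: writing $\Hom(\widetilde{\sC},\Cyl(\sC))^{(m)}$ for the homogeneous weight-$m$ maps, we obtain $\del$-stable decompositions $\Hom(\widetilde{\sC},\Cyl(\sC))=\bigoplus_m \Hom(\widetilde{\sC},\Cyl(\sC))^{(m)}$ and $\sF_m=\bigoplus_{k\ge m}\Hom(\widetilde{\sC},\Cyl(\sC))^{(k)}$, with the analogous statements for $\Hom(\susp\sC_\circ,\Cobar(\sC))$. Because $\sC$ is reduced, every nontrivial vertex has at least two inputs, so at each arity $n$ the weight is bounded by $n-1$; the sums are therefore finite, which yields for free the completeness asserted in the statement. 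The restriction maps $\res_\alpha,\res_\beta$ are obtained by restricting a map to a single color and do not alter the number of nontrivial vertices of any image, so they preserve weight and respect the decomposition, i.e. $\res_\alpha=\bigoplus_m\res_\alpha^{(m)}$ and similarly for $\res_\beta$.

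Finally I would conclude. By Lemma~\ref{tech-lemma}, $\res_\alpha$ and $\res_\beta$ are quasi-isomorphisms on the total complexes; since cohomology commutes with the finite direct sum over weights and the maps are weight-homogeneous, each component $\res_\alpha^{(m)}$, $\res_\beta^{(m)}$ is a quasi-isomorphism. Restricting to $\sF_m=\bigoplus_{k\ge m}(-)^{(k)}$ then exhibits $\res_\alpha,\res_\beta$ on $\sF_m$ as a direct sum of quasi-isomorphisms, hence quasi-isomorphisms. Equivalently, one may observe that the $m$-th component of $\Gr_\sF(\res_\alpha)$ is exactly $\res_\alpha^{(m)}$, which is a quasi-isomorphism, and invoke the comparison theorem for complete filtered complexes. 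I expect the only point requiring genuine care to be the weight bookkeeping for $F\circ\del_0$ in the first step; once the differentials are seen to be weight-preserving, everything that follows is formal.
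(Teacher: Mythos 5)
Your argument is correct, but it reaches the conclusion by a different route than the paper does. The paper's own proof is a one-line remark: rerun the proof of Lemma \ref{tech-lemma} verbatim, restricted to the filtration level $\sF_m$ (the three-summand decomposition, the maps $\incl_\alpha$, $\incl_\beta$, and the cylinder-type argument all restrict because everything in sight preserves weight). You instead treat Lemma \ref{tech-lemma} as a black box: you observe that $\del_0$ and $\del_\sC$ have weight exactly $0$, so the Hom-complexes split into $\del$-stable weight-homogeneous subcomplexes, the restriction maps are weight-homogeneous, and the filtered statement follows formally from the unfiltered one by taking the components with weight $\geq m$. This buys you a cleaner deduction that never re-opens the internals of Lemma \ref{tech-lemma}; what the paper's phrasing buys is robustness (the same template is reused in Theorem \ref{der-q-isom} and Proposition \ref{cyl-endo-uniq}, where one genuinely does need the argument at each filtration level rather than a splitting). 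Two small points of bookkeeping in your write-up: the decomposition $\Hom(\widetilde{\sC},\Cyl(\sC))=\bigoplus_m \Hom(\widetilde{\sC},\Cyl(\sC))^{(m)}$ is a finite direct sum only arity by arity, and the total complex is a \emph{product} over arities of these finite sums, so the global identity should be read arity-wise; this is harmless since cohomology commutes with both products and direct sums and $\res_\alpha$, $\res_\beta$ respect both decompositions. And your key verification --- that $F\circ\del_0$ raises weight by exactly $\wt(F)$ because each term of $\del_0(x^{\alpha\beta})$ contains exactly one nontrivial vertex on which $F$ acts --- is precisely the point that makes either version of the proof work, so it is right that you singled it out.
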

\begin{proof}
The proof is exactly the same as that of Lemma \ref{tech-lemma}, restricting to the appropriate filtration levels.
\end{proof}

With the above results, we can now show our first main result, that restricting derivations of $\Cyl(\sC)$ to derivations of $\Cobar(\sC)$ yields quasi-isomorphisms of Lie algebras. This is the key statement needed for later results concerning derived automorphisms and their action on $\infty$-morphisms.

\begin{theorem}\label{der-q-isom}
The maps
\begin{center}
\begin{tabular}{c c c c}
$\res_\alpha, \res_\beta :$ & $\Der(\Cyl(\sC))$ & $\tto$ & $\Der({\Cobar}(\sC))$
\end{tabular}
\end{center}
given by restricting to a single color $\alpha$ or $\beta$ are homotopic quasi-isomorphisms of dg Lie algebras at all filtration levels.
\end{theorem}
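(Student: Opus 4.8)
The plan is to exploit that a derivation of a \emph{free} operad is determined freely by its restriction to generating corollas, and then to run the weight-filtration argument through Proposition~\ref{tech-lemma-filter}.

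First I would set up the identifications. Since $\Cobar(\sC) = \OP(\susp\sC_\circ)$ and $\Cyl(\sC) = \OP(\widetilde{\sC})$ are free, there are isomorphisms of graded vector spaces $\Der(\Cobar(\sC)) \cong \Hom(\susp\sC_\circ, \Cobar(\sC))$ and $\Der(\Cyl(\sC)) \cong \Hom(\widetilde{\sC}, \Cyl(\sC))$, a derivation being recorded by its values on generators. A direct induction on the root color shows $\Cyl(\sC)(n,0;\alpha) = \Cobar(\sC)(n)^\alpha$ and $\Cyl(\sC)(0,n;\beta) = \Cobar(\sC)(n)^\beta$, so every derivation of $\Cyl(\sC)$ preserves these two single-colored subcomplexes and $\res_\alpha$, $\res_\beta$ are simply the restrictions of a derivation to them. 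From this description it is immediate that $\res_\alpha, \res_\beta$ are morphisms of dg Lie algebras (restriction of a derivation to an invariant free suboperad intertwines the commutator bracket with the full differential $[\del, -]$), that they are chain maps for the full differentials, and that they carry $\sF_m$ into $\sF_m$, since passing to a single color alters neither the weight of a derivation nor $[\del,-]$.

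Next, the quasi-isomorphism statement. Decompose $\del = \del_0 + \del_{>0}$ on $\Cyl(\sC)$ into its weight-preserving and weight-raising parts, so that $[\del, -] = [\del_0, -] + [\del_{>0}, -]$ where the first summand preserves the weight of a derivation and the second strictly raises it (hence maps $\sF_m$ into $\sF_{m+1}$). Consequently the associated graded of $(\sF_m\Der(\Cyl(\sC)), [\del, -])$ carries the differential $[\del_0, -]$, and under the identification above this is exactly $\sF_m\Hom(\widetilde{\sC}, \Cyl(\sC))$ with the differential of Lemma~\ref{tech-lemma} — the Remark there is precisely the derivation formula for $\del_0$ on a mixed generator — and similarly on the $\Cobar(\sC)$ side. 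Proposition~\ref{tech-lemma-filter} then gives that $\res_\alpha, \res_\beta$ are quasi-isomorphisms on these associated graded complexes, and since both filtrations are complete the comparison theorem for complete filtered complexes (Appendix A of \cite{NotesAlgOps}, used as in Theorem~\ref{incl-q-isom}) upgrades this to the statement that $\res_\alpha, \res_\beta$ are quasi-isomorphisms for the full differential.

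Finally, the homotopy, which I expect to be the main obstacle. Using the projection $\Pi : \Cyl(\sC)(n,0;\beta) \to \Cobar(\sC)(n)$ from Corollary~\ref{full-incl-q-isom}, I would define a degree $-1$ map $H : \sF_m\Der(\Cyl(\sC)) \to \sF_m\Der(\Cobar(\sC))$ that vanishes on pure-color generators and sends a mixed generator to $H(\varphi)(\susp x) = \pm\,\Pi(\varphi(x^{\alpha\beta}))$; degree and weight bookkeeping show $H$ is well defined and preserves $\sF_m$. The claim to verify is $\res_\alpha - \res_\beta = [\del, -]\circ H + H\circ[\del, -]$, which is the derivation-level analogue of the identity $\iota_\alpha - \iota_\beta = (\del'_0 + \del''_0)\circ h$ from Theorem~\ref{incl-q-isom}; it rests on $\Pi\circ\iota_\alpha = \Pi\circ\iota_\beta = \id$ and on the fact that $\del'$ and $\del''$ produce exactly the $\iota_\alpha$- and $\iota_\beta$-type terms that $\Pi$ collapses. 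Getting the signs right and confirming that the intermediate terms cancel is the one genuinely technical computation. (As a sanity check one may also note that, since we work over a field, two chain maps are homotopic iff they induce the same map on cohomology, so it would suffice to see $H(\res_\alpha) = H(\res_\beta)$; but the explicit $H$ delivers the chain homotopy directly and respects the filtration.)
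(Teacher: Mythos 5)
Your proposal is correct and follows essentially the same route as the paper: identify derivations with maps on generators, filter by weight and reduce the quasi-isomorphism claim to Lemma~\ref{tech-lemma} / Proposition~\ref{tech-lemma-filter}, and use the projection $\Pi$ on mixed-color corollas to produce the homotopy between $\res_\alpha$ and $\res_\beta$. The paper's version of your map $H$ is the assignment $T(\susp x)=(-1)^{|D|}(\Pi\circ D)(x^{\alpha\beta})$, and it settles the homotopy by checking only that $\res_\alpha D-\res_\beta D=\del(T)$ for closed $D$ (i.e.\ your ``sanity check'' fallback), so no gap remains.
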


\begin{proof}

%It is clear that the above restriction maps are morphisms of dg Lie algebras. Since derivations are uniquely determined by their action on generators,we may equivalently consider the maps
%\begin{center}
%\begin{tabular}{c c c c}
%$\res_\alpha, \res_\beta :$ & $\Hom(\widetilde{\sC}, \Cyl(\sC))$ & $\tto$ & $\Hom(\susp\sC_\circ, {\Cobar}(\sC))$
%\end{tabular}
%\end{center}
%still determined by restricting to a single color $\alpha$ or $\beta$. Note that for this identification from derivations to morphisms to respect the differential structure, the differentials on $\Hom(\widetilde{\sC}, \Cyl(\sC))$ and $\Hom(\susp\sC_\circ, {\Cobar}(\sC))$ will require that morphism act as derivations on trees.

We will show the result for the entire derivation algebras; the argument for a specific filtration level is almost identical, the only difference being restricting to the appropriate filtration level and using Proposition \ref{tech-lemma-filter}.

It is clear that the above restriction maps are morphisms of dg Lie algebras. Since derivations are uniquely determined by their action on generators, we may equivalently consider the maps
\begin{center}
\begin{tabular}{c c c c}
$\res_\alpha, \res_\beta :$ & $\Hom(\widetilde{\sC}, \Cyl(\sC))$ & $\tto$ & $\Hom(\susp\sC_\circ, {\Cobar}(\sC))$
\end{tabular}
\end{center}
still determined by restricting to a single color $\alpha$ or $\beta$. Here, the differentials on $\Hom(\widetilde{\sC}, \Cyl(\sC))$ and $\Hom(\susp\sC_\circ, {\Cobar}(\sC))$ take the following form:

$$ \del(F) = \del \circ F -(-1)^{|F|} \widehat{F} \circ \del $$

\noindent where the map $F$ defined on generators extends uniquely to the derivation $\widehat{F}$. This way, the identification from derivations to morphisms respects the differential structure.

%where $\Hom(\widetilde{\sC}, \Cyl(\sC))$ carries the differential
%\begin{align*}
%\del(F)(\susp x^\alpha) = (\del_{\Cobar} \circ F) (\susp x^\alpha) - (-1)^{|F|} (F \circ \del_{\Cobar}) (\susp x^\alpha) \\
%\del(F)(\susp x^\beta) = (\del_{\Cobar} \circ F) (\susp x^\beta) - (-1)^{|F|} (F \circ \del_{\Cobar}) (\susp x^\beta) \\
%\del(F)(x^{\alpha\beta}) = (\del \circ F) (x^{\alpha\beta}) - (-1)^{|F|} (F \circ \del) (x^{\alpha\beta})
%\end{align*}
%for $F \in \Hom(\widetilde{\sC}, \Cyl(\sC))$, and where $\Hom(\susp\sC_\circ, {\Cobar}(\sC))$ carries the differential
%\begin{align*}
%\del(F)(\susp x) = (\del_{\Cobar} \circ F) (\susp x) - (-1)^{|F|} (F \circ \del_{\Cobar})(\susp x)
%\end{align*}
%for $F \in \Hom(\susp\sC_\circ, {\Cobar}(\sC))$.

Filter $\Hom(\widetilde{\sC}, \Cyl(\sC))$ and $\Hom(\susp\sC_\circ, {\Cobar}(\sC))$ by weight, as in the above Corollary; note that we could have equivalently defined these filtrations on $\Der(\Cyl(\sC))$ and $\Der({\Cobar}(\sC))$. These filtrations are complete and compatible with the appropriate differentials and the restriction maps $\res_\alpha$, $\res_\beta$. As before, we will move to the associated graded complexes which carry simpler differentials; from Lemma E.1 of \cite{OpTwistApp}, it suffices to show that the restriction maps are quasi-isomorphisms in this simpler setting. 
When we move to the associated graded complexes for this filtration, only the part of the differentials coming from the weight $0$ part of the internal differentials survives. Explicitly, $\Hom(\widetilde{\sC}, \Cyl(\sC))$ carries the reduced differential
\begin{align*}
\del(F) = \del_0 \circ F - (-1)^{|F|} \widehat{F} \circ \del_0
\end{align*}
for $F \in \Hom(\widetilde{\sC}, \Cyl(\sC))$, and $\Hom(\susp\sC_\circ, {\Cobar}(\sC))$ carries the differential
\begin{align*}
\del(F) = \del_{\sC} \circ F - (-1)^{|F|} \widehat{F} \circ \del_{\sC}
\end{align*}
for $F \in \Hom(\susp\sC_\circ, {\Cobar}(\sC))$. This is exactly the same situation as in Lemma \ref{tech-lemma}; and so $\res_\alpha, \res_\beta$ are quasi-isomorphisms on the associated graded level, and therefore in general as well.

To show that $\res_\alpha$ and $\res_\beta$ are homotopic, we will show that they induce the same map on cohomology. Recall the map $\Pi : \Cyl(\sC)(n,0;\beta) \to {\Cobar}(\sC)(n)$ from the proof of Corollary \ref{full-incl-q-isom}. Given closed $D \in \Der(\Cyl(\sC))$, define $T \in \Der({\Cobar}(\sC))$ on generators by
$$T(\susp x) = (-1)^{|D|} ( \Pi \circ D) (x^{\alpha\beta}).$$
$D$ is closed, so in particular
$$0 = [ \del, D](x^{\alpha\beta}) = (\del \circ D)(x^{\alpha\beta}) - (-1)^{|D|} (D \circ \del )(x^{\alpha\beta}).$$
If we rearrange the above terms and apply $\Pi$ we obtain the equation
$$(\Pi \circ D \circ \del)(x^{\alpha\beta}) = (-1)^{|D|} (\Pi \circ \del \circ D)(x^{\alpha\beta}) = (-1)^{|D|} (\del \circ \Pi \circ D)(x^{\alpha\beta}) = (\del \circ T)(\susp x)$$
recalling that $\Pi$ is a cochain map. It is straightforward to check that 
$$(\Pi \circ D \circ \del)(x^{\alpha\beta}) = (\res_\alpha D - \res_\beta D - (-1)^{|D|} (T \circ \del)) (sx)$$
and so we substitute this into the previous equation and rearrange terms to see that
$$(\res_\alpha D - \res_\beta D)(sx) = (\del \circ T + (-1)^{|D|} T \circ \del)(sx) = \del ( T )(sx).$$
Thus $\res_\alpha$ and $\res_\beta$ induce the same map on cohomology, and hence are homotopic.
\end{proof}

%Of course, since we are working over a field, quasi-isomorphisms are homotopy equivalences \cite{CW1994}; in particular, we have inverse quasi-isomorphisms. From this we immediately have the following corollary.
%
%\begin{corollary}\label{q-isom-extend}
%There is a quasi-isomorphism $\Phi: \Der({\Cobar}(\sC)) \tto \Der(\Cyl(\sC))$ such that $\res_\alpha \Phi$ and $\res_\beta \Phi$ aremhomotopic to the identity.
%\end{corollary}

We now turn our attention to derivations that may be exponentiated to operad automorphisms. Define
\begin{align*}
\Der'(\Cobar(\sC)) = \sF_1 \Der({\Cobar}(\sC))
\end{align*}
to be the dg Lie algebra of derivations that raise the number of internal vertices by at least one. Equivalently, $\Der'(\Cobar(\sC))$ may be defined as consisting of derivations $D$ that satisfy
\begin{align*}
p_{\susp \sC_\circ} \circ D = 0
\end{align*}
where $p_{\susp \sC_\circ}$ is the canonical projection $\Cobar(\sC) \to \susp \sC_\circ$. This definition, in addition to our standing assumption of working with reduced cooperads $\sC$, ensures that the dg Lie algebra $\Der'(\sC)$ is pronilpotent.

\begin{proposition}\label{exp-cobar}
Given a degree 0 derivation $D \in \Der'({\Cobar} (\sC))$ for a cooperad $\sC$, $D$ is \emph{locally nilpotent}: for all $X \in {\Cobar}(\sC)$, $D^m(X) = 0$ for some $m \geq 0$. Consequently, assuming $D$ is a cocycle, we may exponentiate $D$ to an automorphism of ${\Cobar}(\sC)$,
\begin{align*}
\displaystyle \exp(D) = \sum_{m=0}^{\infty} \frac{1}{m!} D^m.
\end{align*}
Here, we use the same filtration as in the proof of Theorem \ref{der-q-isom}.
\end{proposition}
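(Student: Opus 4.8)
The statement has two parts: local nilpotency of $D$, and the resulting convergence of $\exp(D)$ to an automorphism. The plan is to prove local nilpotency by a weight argument, and then deduce the exponential claim essentially formally from pronilpotency. The key observation is that $D \in \Der'({\Cobar}(\sC)) = \sF_1\Der({\Cobar}(\sC))$ means $D$ raises weight (the number of internal vertices) by at least one; equivalently, $D$ strictly increases the filtration index with respect to the weight filtration used in the proof of Theorem~\ref{der-q-isom}.

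\emph{Local nilpotency.} First I would fix $X \in {\Cobar}(\sC)$ and let $w = \wt(X)$ be its weight, i.e. its number of internal vertices (recall that in ${\Cobar}(\sC)$ the weight is simply the number of internal vertices). Since $D$ is a derivation of weight $\geq 1$, each application of $D$ to a homogeneous-weight element strictly raises the weight: $\wt(D^j X) \geq w + j$ on each nonzero component. The crucial finiteness input is that for a reduced cooperad $\sC$, any fixed level ${\Cobar}(\sC)(n)$ is concentrated in bounded weight: a tree in ${\Cobar}(\sC)(n)$ with $n$ leaves has a bounded number of internal vertices (because $\sC$ is reduced, so $\sC_\circ(0)=\sC_\circ(1)=0$ forces every internal vertex to have at least two inputs, capping the number of vertices by roughly $n-1$). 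Hence there is an $N = N(n)$ with ${\Cobar}(\sC)(n)$ supported in weights $\leq N$, so once $w + j > N$ we must have $D^j X = 0$. Taking $m$ large enough gives $D^m(X)=0$. This is the heart of the argument, and the reducedness hypothesis is exactly what makes it work.

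\emph{Exponentiation.} Given local nilpotency, the sum $\exp(D)(X) = \sum_{m \geq 0} \tfrac{1}{m!} D^m(X)$ is finite on every element $X$, hence well-defined (we work in characteristic zero, so the $1/m!$ are available). To see $\exp(D)$ is an operad automorphism, I would argue as follows. Since $D$ is a degree $0$ derivation, $\exp(D)$ is an operad endomorphism: the Leibniz/derivation identity $D(a \circ_i b) = D(a)\circ_i b + a \circ_i D(b)$ exponentiates, via the standard binomial-type manipulation, to $\exp(D)(a\circ_i b) = \exp(D)(a)\circ_i \exp(D)(b)$, and similarly $\exp(D)$ preserves the unit (as $D(\mathrm{id})=0$). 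Because $D$ is assumed a cocycle, $[\del_{\Cobar}, D]=0$, so $D$ commutes with the internal differential and therefore $\exp(D)$ is a chain map. Invertibility is immediate: $\exp(-D)$ is also well-defined by the same local nilpotency (as $-D$ lies in the same $\sF_1$), and $\exp(D)\exp(-D) = \exp(D-D) = \mathrm{id}$ since $D$ commutes with itself, so the usual series identity for commuting operators applies term by term on each $X$ (where all sums are finite).

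\emph{Main obstacle.} The genuinely substantive step is establishing local nilpotency, and within it the bounded-weight claim for fixed $n$; the exponentiation is then formal. I would be careful to phrase the weight bound using reducedness precisely, since without $\sC(0)=0$ and $\sC(1)=\k$ one could have unboundedly many internal vertices at a fixed arity and the series need not terminate. Everything else (the endomorphism and chain-map properties, invertibility) reduces to standard identities for exponentials of commuting, locally nilpotent derivations in characteristic zero, carried out componentwise where all series are finite.
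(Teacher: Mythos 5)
Your proof is correct and follows essentially the same route as the paper's: local nilpotency from the fact that $D$ raises weight by at least $1$ while reducedness of $\sC$ bounds the number of internal vertices in each arity ${\Cobar}(\sC)(n)$, after which exponentiation is the standard formal argument. You simply spell out the details that the paper leaves as ``straightforward weight considerations'' and ``a standard result.''
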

\begin{proof}
It is a standard result that a locally nilpotent derivation exponentiates to an automorphism, so it is enough to show that, under our assumptions, all derivations $D \in \Der'({\Cobar} (\sC))$ are locally nilpotent. This follows from straightforward weight considerations, since $D$ raises weight by at least $1$, and since all nodal vertices of ${\Cobar}(\sC)$ have at least $2$ incoming edges for reduced $\sC$.
\end{proof}

We have an identical result for derivations of $\Cyl(\sC)$.

\begin{proposition}\label{exp-cyl}
Given a degree 0 cocycle $\widetilde{D} \in \Der'(\Cyl (\sC))$ for a cooperad $\sC$, $\widetilde{D}$ is locally nilpotent, and therefore may be exponentiated to an automorphism of $\Cyl(\sC)$.
\end{proposition}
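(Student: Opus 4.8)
The plan is to follow the proof of Proposition~\ref{exp-cobar} essentially verbatim, since the only structural novelty of $\Cyl(\sC)$ compared to $\Cobar(\sC)$ is the presence of the trivial unary vertices $1^{\alpha\beta}$, and these are invisible to the weight and hence to the argument. As there, I would first reduce to local nilpotence by invoking the standard fact that a locally nilpotent degree-$0$ derivation exponentiates to an automorphism: local nilpotence makes $\exp(\widetilde{D})(X) = \sum_{m} \frac{1}{m!}\widetilde{D}^m(X)$ a finite, hence well-defined, sum for every $X$, and the cocycle hypothesis $[\del,\widetilde{D}]=0$ guarantees that $\exp(\widetilde{D})$ commutes with $\del$, so the resulting operad automorphism is in fact an automorphism of the dg operad $\Cyl(\sC)$. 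It therefore suffices to show that every $\widetilde{D}\in\Der'(\Cyl(\sC))=\sF_1\Der(\Cyl(\sC))$ is locally nilpotent.

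To prove local nilpotence I would fix an element $X$ in a single component $\Cyl(\sC)(a,b;c)$, with $a$ inputs of color $\alpha$, $b$ inputs of color $\beta$, and fixed output color $c$, and bound the weight there. An operadic derivation preserves arity and the input/output color profile, so $\widetilde{D}$ maps this component into itself; since $\widetilde{D}\in\sF_1\Der(\Cyl(\sC))$ raises weight by at least one, each application of $\widetilde{D}$ strictly increases weight while remaining in the same component. Thus it is enough to see that $\wt$ is bounded on $\Cyl(\sC)(a,b;c)$. Here I would use that $\sC$ is reduced, so $\sC_\circ(0)=\sC_\circ(1)=0$ and every \emph{nontrivial} vertex (one decorated by an element of $\susp\sC_\circ$ or $\sC_\circ$, not by $1\in\k$) has at least two incoming edges. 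Smoothing out all the unary $1^{\alpha\beta}$ vertices from the underlying tree produces a tree with the same $n=a+b$ leaves whose internal vertices all have arity at least $2$, and such a tree has at most $n-1$ internal vertices; hence $\wt(X)\leq n-1$ and $\widetilde{D}^m(X)=0$ once $m>(n-1)-\wt(X)$.

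The one point requiring care, and the place I expect the argument to look slippery, is precisely these trivial unary vertices: unlike in $\Cobar(\sC)$, an element of $\Cyl(\sC)$ may carry arbitrarily many $1^{\alpha\beta}$ vertices, so one might worry that the weight is unbounded. The resolution is that weight counts only nontrivial vertices by definition, and the bound above depends only on the number of leaves, which $\widetilde{D}$ preserves; the unary vertices therefore never obstruct local nilpotence. With this observation the proof of Proposition~\ref{exp-cobar} carries over unchanged.
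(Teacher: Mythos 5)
Your proof is correct and follows the same route as the paper: reduce to local nilpotence via the standard exponentiation fact, then bound the weight on each component using that reducedness of $\sC$ forces every nontrivial vertex to have at least two incoming edges, with the only new observation for $\Cyl(\sC)$ being that the unary vertices are exactly the trivial $1^{\alpha\beta}$'s and hence do not contribute to weight. The paper's proof is a one-line appeal to the $\Cobar(\sC)$ argument plus precisely that observation, so your write-up is just a more explicit version of the same argument.
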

\begin{proof}
The same weight argument works here as for ${\Cobar}(\sC)$, with the observation that for a vertex to have only a single incoming edge, it must be $1^{\alpha\beta} \in \Cyl(\sC)(1,0;\beta)$.
\end{proof}
See Appendix \ref{app-der-aut} for a discussion of these notions for the more general case of quasi-free operads. Following this Appendix, define
$$\Aut'(\Cobar(\sC)) = \{ \varphi \in \Aut(\Cobar(\sC)) \hspace{2mm}|\hspace{2mm} \pi \circ \varphi |_{\susp\sC} = \id_{\susp\sC} \}$$
and
$$\Aut'(\Cyl(\sC)) = \{ \varphi \in \Aut(\Cyl(\sC)) \hspace{2mm}|\hspace{2mm} \pi \circ \varphi |_{\widetilde{\sC}} = \id_{\widetilde{\sC}} \},$$
which are the groups obtained by exponentiating $Z^0 \Der'(\Cobar (\sC))$ and $Z^0 \Der'(\Cyl (\sC))$, respectively. The earlier restriction maps induce obvious group homomorphisms, and the results from Appendix \ref{app-der-aut} show that these group homomorphisms are isomorphisms on homotopy classes of automorphisms.

\begin{theorem}\label{res-isom-h}
The group homomorphisms
\begin{center}
\begin{tabular}{c c c c}
$\res_\alpha, \res_\beta :$ & $\Aut'(\Cyl(\sC))$ & $\tto$ & $\Aut'({\Cobar}(\sC))$
\end{tabular}
\end{center}
induce identical isomorphisms on homotopy classes:
\begin{center}
\begin{tabular}{c c c c}
$\res :$ & $\hAut'(\Cyl(\sC))$ & $\tto$ & $\hAut'({\Cobar}(\sC))$.
\end{tabular}
\end{center}
\end{theorem}
\begin{proof}
Since $\res_\alpha$ and $\res_\beta$ are homotopic dg Lie algebra quasi-isomorphisms from Theorem \ref{der-q-isom}, they induce a single group homomorphism
$$\res: H^0 (\Der'(\Cyl(\sC))) \tto H^0(\Der'(\Cobar(\sC)))$$
(where the group structure is given my the Campbell-Hausdorff formula, as explained in Appendix \ref{app-der-aut}). Using the isomorphisms of Proposition \ref{exp-log-h}, we immediately get the induced isomorphism
$$\res: \hAut(\Cyl(\sC)) \tto \hAut(\Cobar(\sC)).$$
\end{proof}

\section{Acting on $\infty$-morphisms}

We will now turn our attention to homotopy algebras, and show how the operadic techniques and results developed earlier may be applied to their study. In particular, we will use $\Cyl(\sC)$ to study $\infty$-morphisms.

\begin{proposition}\label{prop:cyl-alg}
A $\Cyl(\sC)$-algebra structure on a pair of dg vector spaces $(V,W)$ is equivalent to the following triple of data:
\begin{enumerate}
\item a map ${\Cobar} (\sC) \to \End_V$;
\item a map ${\Cobar} (\sC) \to \End_W$;
\item an $\infty$-morphism $V \leadsto W$.
\end{enumerate}
\end{proposition}

\begin{proof}
%\begin{align*}
%\End_{V,W}(n,k;\alpha) = \Hom_\k (V^{\ot n} \ot W^{\ot k}, V)\\
%\End_{V,W}(n,k;\beta) = \Hom_\k (V^{\ot n} \ot W^{\ot k}, W).
%\end{align*}
Following \cite{StableI}, given cochain complexes $V$ and $W$, let $\End_{V,W}$ be the $2$-colored endomorphism operad. The pair $V$, $W$, being algebras over $\Cyl(\sC)$ means that there is a map of colored operads
\begin{align*}
F: \Cyl(\sC) \tto \End_{V,W}.
\end{align*}
By construction, the single-color portions of the above map correspond exactly to maps
\begin{align*}
F_\alpha: {\Cobar}(\sC) \tto \End_V \\
F_\beta: {\Cobar}(\sC) \tto \End_W.
\end{align*}
Observe next that the mixed-color portion of $F$ can be expressed in terms of its components
\begin{align*}F_{\alpha\beta}(n): \Cyl(\sC)(n,0;\beta) \tto \End_{V,W}(n,0;\beta).
\end{align*}
Equivalently,
\begin{align*}
F_{\alpha\beta}(n): \sC(n) \tto \Hom_\k (V^{\ot n}, W).
\end{align*}
This is, in turn, equivalent to a map
\begin{align*}
U_n: (\sC(n) \ot V^{\ot n})^{S_n} \tto W.
\end{align*}
which extends uniquely to (and is uniquely determined by) a coalgebra map
\begin{align*}
U: \sC(V) \tto \sC(W).
\end{align*}
Finally, it is a straightforward check that the compatibility of $F$ with the differentials on $\Cyl(\sC)$ and $\End_{V,W}$ is equivalent to $U$ being a dg coalgebra map, respecting the coderivations $Q_V$ and $Q_W$ (which correspond to $F_\alpha$ and $F_\beta$).
\end{proof}

It is easy to see that, given an operad $\sO$, an $\sO$-algebra $V$ via the map $F: \sO \to \End_V$, and an endomorphism $\varphi$ of $\sO$, the composite $F \circ \varphi$ defines a new $\sO$-algebra structure on $V$ via pullback, which we will often denote $V^{\varphi}$. This also holds true for colored operads and algebras over them. Thus, given a pair $(V,W)$ that is an algebra over $\Cyl(\sC)$ via the operad morphism $\widetilde{F}:\Cyl(\sC)\to\End_{V,W}$, and given an endomorphism $\widetilde{\varphi}$ of $\Cyl(\sC)$, the morphism $\widetilde{F}^{\widetilde{\varphi}} = \widetilde{F} \circ \widetilde{\varphi}$ defines a new $\Cyl(\sC)$-algebra structure on $(V,W)$; the result is encoded in the diagram
\begin{align*}
U^{\widetilde{\varphi}}: V^{\widetilde{\varphi}_{\alpha}} \leadsto W^{\widetilde{\varphi}_{\beta}}
\end{align*}
where $\widetilde{\varphi}_{\alpha}$ and $\widetilde{\varphi}_{\beta}$ denote the obvious restriction maps of $\widetilde{\varphi}$ onto the first and second single-colored components, respectively (we will begin using this notation more generally, keeping in mind the earlier results about $\res_\alpha$ and $\res_\beta$ for derivations). Keeping with the above notation for consistency, we will henceforth decorate colored derivations, automorphisms, maps, etc. by tildes (e.g. $\widetilde{\varphi}$), and omit such decoration for derivations etc. on single-color objects.

This procedure is very general, but requires that we start with an endomorphism of $\Cyl(\sC)$, which may be difficult to construct. The goal of this section is to show how, in certain instances, to extend an automorphism of $\Cobar(\sC)$ to an automorphism of $\Cyl(\sC)$ in a well-controlled way. Therefore, given a diagram $U: V \leadsto W$, we will not only be able to modify $V$ and $W$ via the automorphism $\varphi$, but the $\infty$-morphism $U$ as well, in some coherent way.

Given an algebra structure map $F$ as above, we will often abuse notation slightly and write $F^D$ instead of $F^{\exp(D)}$ (likewise for the colored setting). We will similarly abbreviate the decoration of homotopy algebras, writing $V^D$ instead of $V^{\exp(D)}$, etc. At this point, we can state and prove the main theorem of the paper.

\begin{theorem}\label{main-thm}
Let $V$ and $W$ be ${\Cobar}(\sC)$-algebras for a cooperad $\sC$, and let $U: V \leadsto W$ be an $\infty$-morphism between them. Given a degree $0$ cocycle $D \in \Der'({\Cobar} (\sC))$, there exists a degree $0$ cocycle $\widetilde{D} \in \Der'(\Cyl(\sC))$ such that $D$, $\widetilde{D}_\alpha$, and $\widetilde{D}_\beta$ are cohomologous in $\Der'(\Cobar(\sC))$. Therefore we can construct
\begin{align*}
U^{\widetilde{D}}: V^{\widetilde{D}_\alpha} \leadsto W^{\widetilde{D}_\beta}
\end{align*}
such that $V^{\widetilde{D}_\alpha}$ is homotopy equivalent to $V^D$ and $W^{\widetilde{D}_\beta}$ is homotopy equivalent to $W^D$, and so that the linear term of $U$ is unchanged: $U^{\widetilde{D}}_{(0)} = U_{(0)}$.
\end{theorem}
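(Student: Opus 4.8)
The plan is to produce $\widetilde{D}$ by lifting $D$ across the restriction quasi-isomorphisms $\res_\alpha,\res_\beta$ from Theorem~\ref{der-q-isom}, and then to read off the homotopy equivalences as a formal consequence of the cohomological relations among $D$, $\widetilde D_\alpha$, $\widetilde D_\beta$. Concretely, I would first invoke Theorem~\ref{der-q-isom} with $m=1$ (working inside $\Der'$, i.e. $\sF_1$): the maps $\res_\alpha,\res_\beta : \sF_1\Der(\Cyl(\sC)) \to \sF_1\Der({\Cobar}(\sC))$ are quasi-isomorphisms of dg Lie algebras. Since $D \in \Der'({\Cobar}(\sC))$ is a degree $0$ cocycle, surjectivity on cohomology lets me choose a degree $0$ cocycle $\widetilde D \in \Der'(\Cyl(\sC))$ with $\res_\alpha \widetilde D = \widetilde D_\alpha$ cohomologous to $D$. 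But Theorem~\ref{der-q-isom} also asserts that $\res_\alpha$ and $\res_\beta$ are \emph{homotopic}, hence induce the same map on cohomology; therefore $\widetilde D_\beta = \res_\beta\widetilde D$ is cohomologous to $\widetilde D_\alpha$, and so all three of $D$, $\widetilde D_\alpha$, $\widetilde D_\beta$ are cohomologous in $\Der'(\sC)$. This gives the first assertion.

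**From cohomologous cocycles to homotopy equivalences.** The second half is where the real content lies: I must convert the statement ``$\widetilde D_\alpha$ is cohomologous to $D$'' into ``$V^{\widetilde D_\alpha}$ is homotopy equivalent to $V^D$.'' The key is that cohomologous degree $0$ cocycles in $\Der'({\Cobar}(\sC))$ differ by a coboundary, say $\widetilde D_\alpha - D = [\del_{{\Cobar}}, H] = \del_{{\Cobar}}\circ H - H\circ \del_{{\Cobar}}$ for some degree $-1$ derivation $H \in \Der'({\Cobar}(\sC))$. Exponentiating along the one-parameter family of derivations interpolating between $D$ and $\widetilde D_\alpha$ (using that $\Der'$ is pronilpotent, so all exponentials are well defined as in Proposition~\ref{exp-cobar}), the gauge element $H$ integrates to an $\infty$-isomorphism intertwining $\exp(D)$ and $\exp(\widetilde D_\alpha)$ as automorphisms of ${\Cobar}(\sC)$. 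Composing with the algebra structure map $F_V : {\Cobar}(\sC) \to \End_V$, this produces an $\infty$-isomorphism $V^D \leadsto V^{\widetilde D_\alpha}$, in particular a homotopy equivalence; the same argument with $F_W$ gives $W^D \leadsto W^{\widetilde D_\beta}$.

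**Assembling $U^{\widetilde D}$.** Finally, I would construct the diagram $U^{\widetilde D}: V^{\widetilde D_\alpha} \leadsto W^{\widetilde D_\beta}$ directly from the operadic formalism set up just before the theorem: the $\infty$-morphism $U: V \leadsto W$ together with the two algebra structures is precisely a $\Cyl(\sC)$-algebra structure $\widetilde F: \Cyl(\sC) \to \End_{V,W}$ (by the Proposition identifying $\Cyl(\sC)$-algebras with triples). Since $\widetilde D$ is a degree $0$ cocycle in $\Der'(\Cyl(\sC))$, Proposition~\ref{exp-cyl} exponentiates it to an automorphism $\exp(\widetilde D)$ of $\Cyl(\sC)$, and the twisted structure map $\widetilde F \circ \exp(\widetilde D)$ is a new $\Cyl(\sC)$-algebra structure on the pair $(V,W)$, whose single-color restrictions are exactly $F_V\circ\exp(\widetilde D_\alpha)$ and $F_W\circ\exp(\widetilde D_\beta)$ — that is, the $\infty$-morphism $U^{\widetilde D}: V^{\widetilde D_\alpha} \leadsto W^{\widetilde D_\beta}$, as required.

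**Main obstacle.** I expect the cleanest part to be the existence of $\widetilde D$, which is essentially immediate from Theorem~\ref{der-q-isom}; the subtle point is the passage from ``cohomologous cocycles'' to ``homotopy-equivalent algebras.'' Here one must be careful that the gauge-type integration of the coboundary $H$ genuinely yields an $\infty$-isomorphism of ${\Cobar}(\sC)$-algebras and not merely a chain homotopy at the level of operad maps — in other words, that exponentiating a homotopy between structure-inducing automorphisms respects the algebra structures and lands in the correct homotopy category. Making this precise (e.g. via a one-parameter flow argument using pronilpotence, or by citing a standard homotopy-invariance result for $\infty$-morphisms) is the technical heart, and is where I would concentrate the most care.
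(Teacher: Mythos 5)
Your construction of $\widetilde D$ (lift $D$ along the quasi-isomorphism $\res_\alpha$ on $\sF_1$, then use that $\res_\alpha$ and $\res_\beta$ are homotopic to get all three cocycles cohomologous) and your assembly of $U^{\widetilde D}$ by twisting the $\Cyl(\sC)$-algebra structure map by $\exp(\widetilde D)$ are exactly the paper's argument. The divergence is in the middle step, and there it contains a genuine gap. You write $\widetilde D_\alpha - D = [\del_{\Cobar},H]$ and claim that $H$ ``integrates to an $\infty$-isomorphism intertwining $\exp(D)$ and $\exp(\widetilde D_\alpha)$ as automorphisms of $\Cobar(\sC)$.'' This conflates two different layers: $\infty$-(quasi-iso)morphisms relate \emph{algebras}, not operad automorphisms, and there is no sense in which a degree $-1$ derivation exponentiates to such an object; nor are $\exp(D)$ and $\exp(\widetilde D_\alpha)$ conjugate or otherwise ``intertwined'' by anything obtained formally from $H$. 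What is actually true, and what the paper assembles from external results, is the chain: (i) cohomologous degree-$0$ cocycles in $\Der'(\Cobar(\sC))$ exponentiate to automorphisms that are \emph{homotopic as dg operad morphisms} in the path-object sense (Appendix A.3 of \cite{TamConGRT} --- itself a nontrivial flow/ODE argument using pronilpotence, not a formal consequence of it); (ii) this notion of homotopy agrees with the cylinder homotopy of \cite{OpCobarCyl} (via \cite{3Hom}); (iii) cylinder-homotopic operad maps $\Cobar(\sC)\to\End_V$ induce $\infty$-quasi-isomorphic algebra structures (Theorem 5.2.1 of \cite{OpCobarCyl}).

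To your credit, you flag exactly this passage as the technical heart and note that one could ``cite a standard homotopy-invariance result for $\infty$-morphisms''; but as written the step is asserted rather than proved, and the assertion is not the statement you need. Replacing your gauge-integration paragraph with the three-step chain above (or with proofs of those three facts) closes the gap; everything else in your proposal matches the paper's proof.
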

\begin{proof}
The existence of $\widetilde{D}$ satisfying the above properties is provided by Theorem \ref{der-q-isom}, which says that the maps $\res_\alpha$, $\res_\beta$ are homotopic quasi-isomorphisms. Therefore the automorphism $\exp(\widetilde{D}) \in \Aut(\Cyl(\sC))$ may be used to modify $U: V \leadsto W$ to $U^{\widetilde{D}}: V^{\widetilde{D}_\alpha} \leadsto W^{\widetilde{D}_\beta}$, as explained earlier.

For the statement about homotopy equivalence, observe first that Proposition \ref{exp-log-h} says that exponentiating cohomologous derivations of $\Cobar(\sC)$ yields homotopic automorphisms (here, we use a path object notion of homotopy). Thus, $\exp(D)$ and $\exp(\widetilde{D})_\alpha$ are homotopic. Using \cite{3Hom} to link various notions of homotopy, we see that $\exp(D)$ and $\exp(\widetilde{D})_\alpha$ are cylinder homotopic in the sense of \cite{OpCobarCyl}. Therefore, Theorem 5.2.1 of \cite{OpCobarCyl} says that there is an $\infty$-quasi-isomorphism
\begin{align*}
\Phi_V: V \leadsto V^{\widetilde{D}_\alpha}.
\end{align*}
Similarly, using that $D$ and $\widetilde{D}_\beta$ are cohomologous, we deduce the existence of an $\infty$-quasi-isomorphism
\begin{align*}
\Phi_W: W \leadsto W^{\widetilde{D}_\beta}.
\end{align*}
Finally, the statement that the linear terms of $U^{\widetilde{D}}$ and $U$ coincide follows from the fact that we are modifying $U$ via exponentiated derivations, which necessarily start with the identity. Since all derivations considered raise weight by at least $1$, all linear terms remain unchanged.
%Let
%\begin{align*}
%F_V: {\Cobar}(\sC) \tto \End_V \hspace{15mm} F_W: {\Cobar}(\sC) \tto \End_W
%\end{align*}
%be the ${\Cobar}(\sC)$-algebra structure maps on $V$ and $W$, with the $\infty$-morphism $U: V \leadsto W$. Given $D \in \Der'({\Cobar} (\sC))$ use a homotopy inverse $\Phi: \Der({\Cobar}(\sC)) \to \Der(\Cyl(\sC))$ to the quasi-isomorphisms of Theorem \ref{der-q-isom} to obtain the derivation $\Phi(D) \in \sF_1 \Der(\Cyl (\sC))$. Proposition \ref{exp-cyl} then gives the automorphism $\exp (\Phi (D))$, which acts on $\Cyl(\sC)$ to determine a new $\Cyl(\sC)$-algebra structure on the pair $(V,W)$; this gives us our new ${\Cobar}(\sC)$-algebras $V^D$ and $W^D$, and a new $\infty$-morphism $U^D : V^D \leadsto W^D$. This construction is unique up to homotopy because $\Phi$ is a quasi-isomorphism.
\end{proof}
In \cite{TamConGRT}, we need the above theorem exactly, in particular that the $\infty$-morphism $U^{\widetilde{D}}$ comes from an exponentiated derivation. The following corollary may be useful in other applications, and may be seen as a more full answer to the motivating question.

\begin{corollary}
Let $V$ and $W$ be ${\Cobar}(\sC)$-algebras for a cooperad $\sC$, and let $U: V \leadsto W$ be an $\infty$-morphism between them. Given a degree $0$ cocycle $D \in \Der'({\Cobar} (\sC))$, there exists an $\infty$-morphism
\begin{align*}
U' : V^D \leadsto W^D
\end{align*}
\end{corollary}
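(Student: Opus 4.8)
The plan is to build $U'$ by pre- and post-composing the morphism $U^{\widetilde{D}}$ supplied by Theorem \ref{main-thm} with the homotopy equivalences recorded there, after reversing one of them. Indeed, Theorem \ref{main-thm} produces a degree $0$ cocycle $\widetilde{D} \in \Der'(\Cyl(\sC))$, the twisted $\infty$-morphism
\begin{align*}
U^{\widetilde{D}}: V^{\widetilde{D}_\alpha} \leadsto W^{\widetilde{D}_\beta},
\end{align*}
and $\infty$-quasi-isomorphisms $\Phi_V: V^D \leadsto V^{\widetilde{D}_\alpha}$ and $\Phi_W: W^D \leadsto W^{\widetilde{D}_\beta}$ witnessing the homotopy equivalences $V^{\widetilde{D}_\alpha} \simeq V^D$ and $W^{\widetilde{D}_\beta} \simeq W^D$. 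These already connect the $D$-twisted structures to the $\widetilde{D}$-twisted ones; the task is merely to assemble them into a single morphism going from $V^D$ to $W^D$.

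First I would invoke the standard fact that an $\infty$-quasi-isomorphism of ${\Cobar}(\sC)$-algebras admits an $\infty$-quasi-inverse (see \cite{NotesAlgOps}, \cite{AlgOps}); applied to $\Phi_W$ this yields an $\infty$-quasi-isomorphism $\Phi_W^{-1}: W^{\widetilde{D}_\beta} \leadsto W^D$. Since $\infty$-morphisms are by definition maps of dg coalgebras $\sC(V) \to \sC(W)$, they compose simply as such maps, so I would set
\begin{align*}
U' = \Phi_W^{-1} \circ U^{\widetilde{D}} \circ \Phi_V : V^D \leadsto W^D.
\end{align*}
Reading off the chain $V^D \leadsto V^{\widetilde{D}_\alpha} \leadsto W^{\widetilde{D}_\beta} \leadsto W^D$ shows that the source and target of $U'$ are exactly the $D$-twisted coalgebra structures, as required; no further compatibility needs to be checked, as each arrow is already a dg coalgebra map.

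The main obstacle is the invertibility of $\infty$-quasi-isomorphisms: without a quasi-inverse for $\Phi_W$ one is left with a zig-zag $V^D \leadsto V^{\widetilde{D}_\alpha} \leadsto W^{\widetilde{D}_\beta} \leftsquigarrow W^D$ that cannot be collapsed into a single $\infty$-morphism $V^D \leadsto W^D$. Once that input is granted, the construction is a formal composition. It is worth emphasizing that $U'$ is canonical only up to homotopy, since $\Phi_V$, $\Phi_W$, and the quasi-inverse $\Phi_W^{-1}$ are each themselves defined only up to homotopy; this is entirely in keeping with the ``unique up to homotopy'' philosophy established throughout the paper.
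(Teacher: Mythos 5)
Your proposal is correct and is essentially identical to the paper's own proof: the paper also defines $U' = {\Phi_W}^{-1} \circ U^{\widetilde{D}} \circ \Phi_V$, citing Section 10.4 of \cite{AlgOps} for the homotopy inverse ${\Phi_W}^{-1}$. The only cosmetic difference is that you spell out the invertibility of $\infty$-quasi-isomorphisms as the key external input, which the paper handles by the same reference.
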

\begin{proof}
Using the same notation as in the proof of Theorem \ref{main-thm}, let
\begin{align*}
U' = {\Phi_W}^{-1} \circ U^{\widetilde{D}} \circ \Phi_V : V^D \leadsto W^D,
\end{align*}
where ${\Phi_W}^{-1}: W^{\widetilde{D}_\beta} \leadsto W^D$ is a homotopy inverse to $\Phi_W$ in the sense of Section 10.4 of \cite{AlgOps}.
\end{proof}

We may also study how the construction behaves when iterated, and see that the result is straightforward. For simplicity of notation, we will just focus on the $\infty$-morphisms - the changes on the source/target algebras should be clear.

\begin{proposition}
Let $U: V \leadsto W$ be as above, and $D_1, D_2 \in \Der'(\Cobar(\sC))$ degree zero closed derivations that give $\widetilde{D}_1, \widetilde{D}_2 \in \Der'(\Cyl(\sC))$. Then, using the above notation, 
\begin{align*}
(U^{\widetilde{D}_1})^{\widetilde{D}_2} = U^{\CH(\widetilde{D}_1, \widetilde{D}_2)}
\end{align*}
where $\CH(x,y)$ denotes the Campbell-Hausdorff series in the symbols $x$ and $y$.
\end{proposition}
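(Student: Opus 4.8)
The plan is to reduce the statement to the corresponding identity for the exponentiated automorphisms of $\Cyl(\sC)$, and then invoke the standard relationship between composition of automorphisms and the Campbell--Hausdorff series. Recall from the discussion preceding the proposition that decorating the $\infty$-morphism $U$ by a derivation $\widetilde{D}$ means precomposing the structure map $\widetilde{F}: \Cyl(\sC) \to \End_{V,W}$ with the automorphism $\exp(\widetilde{D})$. Thus, writing $\widetilde{F}$ for the colored structure map encoding $U$, decorating by $\widetilde{D}_1$ produces $\widetilde{F} \circ \exp(\widetilde{D}_1)$, and then decorating that result by $\widetilde{D}_2$ produces $\widetilde{F} \circ \exp(\widetilde{D}_1) \circ \exp(\widetilde{D}_2)$. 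So the entire claim rests on the operadic identity
\begin{align*}
\exp(\widetilde{D}_1) \circ \exp(\widetilde{D}_2) = \exp(\CH(\widetilde{D}_1, \widetilde{D}_2))
\end{align*}
holding as automorphisms of $\Cyl(\sC)$.

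First I would make precise the composition law for decorations. The key point is that $(U^{\widetilde{D}_1})^{\widetilde{D}_2}$ is, by definition, the $\infty$-morphism encoded by $(\widetilde{F} \circ \exp(\widetilde{D}_1)) \circ \exp(\widetilde{D}_2)$, and since composition of operad endomorphisms is associative, this equals the decoration of $U$ by the single automorphism $\exp(\widetilde{D}_1)\circ\exp(\widetilde{D}_2)$. Next I would establish the Campbell--Hausdorff identity at the level of automorphisms. Both $\widetilde{D}_1$ and $\widetilde{D}_2$ lie in $\Der'(\Cyl(\sC)) = \sF_1\Der(\Cyl(\sC))$, which as noted in the excerpt is pronilpotent, and each is locally nilpotent by Proposition~\ref{exp-cyl}, so that $\exp(\widetilde{D}_i)$ is a well-defined automorphism. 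In a pronilpotent dg Lie algebra the standard Campbell--Hausdorff formula applies: the series $\CH(\widetilde{D}_1, \widetilde{D}_2)$ converges (the weight filtration guarantees that on any fixed element only finitely many terms contribute), lies again in $\Der'(\Cyl(\sC))$, and satisfies $\exp(\widetilde{D}_1)\circ\exp(\widetilde{D}_2) = \exp(\CH(\widetilde{D}_1,\widetilde{D}_2))$.

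The final step is to confirm that $\CH(\widetilde{D}_1,\widetilde{D}_2)$ is itself a legitimate decorating derivation, i.e.\ that it is a degree zero cocycle in $\Der'(\Cyl(\sC))$, so that $U^{\CH(\widetilde{D}_1,\widetilde{D}_2)}$ is meaningful. Degree zero is immediate since $\widetilde{D}_1,\widetilde{D}_2$ both have degree $0$ and the bracket preserves degree; the cocycle condition follows because the closed degree-zero derivations form a Lie subalgebra (the differential on $\Der(\Cyl(\sC))$ is the commutator with $\del$, which is a derivation of the bracket), so every iterated bracket of cocycles is a cocycle; and membership in $\Der'$ is preserved because the bracket of two weight-raising derivations raises weight by at least the sum, hence is again in $\sF_1$.

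The main obstacle is not any single computation but the careful bookkeeping ensuring that the pronilpotent Campbell--Hausdorff machinery genuinely applies in this operadic setting, in particular that the weight filtration on $\Cyl(\sC)$ makes the relevant series converge termwise on each element and that $\exp$ is a bijection from the pronilpotent Lie algebra onto the group of weight-raising automorphisms. Once this framework is in place the proposition is essentially formal, reducing to associativity of composition together with the abstract identity $\exp(x)\exp(y) = \exp(\CH(x,y))$; so I would expect the bulk of the proof to consist of citing Proposition~\ref{exp-cyl} for local nilpotency and pointing to the standard Campbell--Hausdorff theorem in the pronilpotent setting, rather than reproving it.
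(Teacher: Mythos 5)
Your proposal is correct and follows essentially the same route as the paper: the paper's proof is a one-line reduction to $F \circ \exp(\widetilde{D}_1) \circ \exp(\widetilde{D}_2) = F \circ \exp(\CH(\widetilde{D}_1, \widetilde{D}_2))$, relying on associativity of composition and the standard Campbell--Hausdorff identity for exponentiated derivations. Your additional care about convergence of the series in the pronilpotent filtration and closure of $\Der'(\Cyl(\sC))$ under the bracket only fills in details the paper leaves implicit.
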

\begin{proof}
If $U: V \leadsto W$ is an algebra over $\Cyl(\sC)$ via $F: \Cyl(\sC) \to \End_{V,W}$, we have
\begin{align*}
F \circ \exp(\widetilde{D}_1) \circ \exp(\widetilde{D}_2) = F \circ \exp(\CH(\widetilde{D}_1, \widetilde{D}_2))
\end{align*}
which gives the desired formula.
\end{proof}

As an example of a situation in which $H^0 ( \Der' ({\Cobar} (\sC)))$ is known to be nonzero, \cite{KonGrGRT} gives that $H^0 ( \Der' ({\Cobar} ( \Ger^{\vee}))) \cong \mathfrak{grt}$, the Grothendieck-Teichm\"{u}ller Lie algebra (see Section 4.2 of \cite{NotesAlgOps} for information on $\Ger^{\vee}$). This leads to an application of Theorem \ref{main-thm} to justify a statement made in Section 10.2 of \cite{KonGrGRT}, concerning $\text{GRT}_1$-equivariance of Tamarkin's construction of formality morphisms. While the full explanation is outside the scope of this paper, the idea is that while \cite{KonGrGRT} details how $\text{GRT}_1$ acts on homotopy algebras, it does not explicitly explain how it acts on $\infty$-morphisms, a gap filled by our Theorem \ref{main-thm}. The paper \cite{TamConGRT} deals with this situation fully; alternately, in the stable setting, this question will be addressed in \cite{StableII}.

\subsection{Homotopy uniqueness}

Given that the only choices in the proof of Theorem \ref{main-thm} involve cohomologous derivations, the result should be ``unique up to homotopy'' in some sense. We give two characterizations of this uniqueness; a full answer would rely on a more appropriate theoretical framework, such as the theory of model categories or $\infty$-categories, that are outside the scope of this paper.

First, and most obviously, we can reinterpret Theorem \ref{res-isom-h} as saying the following:

\begin{proposition}
Any automorphism $\widetilde{\varphi} \in \Aut'(\Cyl(\sC))$ is uniquely determined up to homotopy by its restriction onto either color. In particular, if Theorem \ref{main-thm} produces $\exp(\widetilde{D})$ such that $\exp(D)$ is homotopic to either $\widetilde{\varphi}_\alpha$ or $\widetilde{\varphi}_\beta$, then $\exp(\widetilde{D})$ is homotopic to  $\widetilde{\varphi}$.
\end{proposition}

Homotopic automorphisms will yield homotopic operad maps $\Cyl(\sC) \to \End_{V,W}$, and so the second characterization of homotopy uniqueness involves unraveling exactly what results from homotopic structure maps, in terms of the resulting homotopy algebras and $\infty$-morphisms. This should be viewed as a $2$-colored extension of the result from \cite{OpCobarCyl} that homotopic structure maps $\Cobar(\sC) \to \End_V$ yield homotopy equivalent algebras.

\begin{proposition}
Let $F, G: \Cyl(\sC) \to \End_{V,W}$ be maps of operads, corresponding respectively to the homotopy algebras and $\infty$-morphisms
$$ U_F: V_F \leadsto W_F$$
$$ U_G: V_G \leadsto W_G.$$
Suppose $F$ is homotopic to $G$. Then we obtain $\infty$-quasi-isomorphisms
$$\Phi: V_F \leadsto V_G$$
$$\Psi: W_F \leadsto W_G$$
such that $\Psi \circ U_F$ is homotopic to $U_G \circ \Phi$. That is, the following diagram of $\infty$-morphisms commutes up to homotopy:
\begin{center}
\begin{tikzpicture}[descr/.style={fill=white,inner sep=1.5pt}]
        \matrix (m) [
            matrix of math nodes,
            row sep=4 em,
            column sep=4 em,
            text height=1.5ex, text depth=0.25ex
        ]
        {V_F & W_F   \\
	 V_G & W_G \\
        };
        \draw[->, font=\scriptsize,>=latex]
        (m-1-1) edge node[auto]{$U_F$} (m-1-2)
        (m-2-1) edge node[auto]{$U_G$} (m-2-2)
        (m-1-1) edge node[auto]{$\Phi$} (m-2-1)
        (m-1-2) edge node[auto]{$\Psi$} (m-2-2);
\end{tikzpicture}
\end{center}
\end{proposition}
\begin{proof}
Let
$$\sH: \Cyl(\sC) \to \End_{V,W} \ot \Omega^\bullet(\kk)$$
be the operadic homotopy between $F$ and $G$:
$$\sH|_{t=0, dt=0} = F, \hspace{3mm} \sH|_{t=1, dt=0} = G .$$
By restricting $\sH$ to color $\alpha$, we see that $F_\alpha$ is homotopic to $G_\alpha$. As explained more fully in the proof of Theorem \ref{main-thm}, this yields homotopy equivalent algebras $\Phi: V_F \leadsto W_F$ \cite{OpCobarCyl}. Restricting to color $\beta$, we similarly obtain $\Psi$.

Let us express
$$\sH(t,dt) = \sH_0(t)+\sH_1(t)dt$$
so that $\sH$ being a map of operads is equivalent to the following:
\begin{enumerate}
\item For all $t$, $\sH_0$ is a map of operads $\Cyl(\sC) \to \End_{V,W}$
\item For all $t$, $\sH_1$ is a derivation relative to $\sH_0$
\item $\frac{d}{dt}\sH_0 = \del_{\End_{V,W}} \circ \sH_1 + \sH_1 \circ \del_{\Cyl(\sC)}$
\end{enumerate}
(see also Proposition \ref{exp-log-h}). We will use the above data to construct an explicit homotopy between $\Psi \circ U_F$ and $U_G \circ \Phi$, that is, a map of cofree $\sC$-coalgebras
$$H: \sC(V_F) \tto \sC(W_G \ot \Omega^\bullet(\kk))$$
such that
$$H|_{t=0, dt=0} = \Psi \circ U_F , \hspace{3mm} H|_{t=1, dt=0} = U_G \circ \Phi .$$
Here, we are using the notation $\sC(V_F)$ to denote the cofree $\sC$-coalgebra on $V$, with differential coming from $F_\alpha : \Cobar(\sC) \to \End_V$, and similarly elsewhere. If we also write
$$H(t,dt) = H_0(t)+H_1(t)dt,$$
we see the $H$ must satisfy appropriate parallel conditions as $\sH$ listed above; $H_0$ is a map of coalgebras, $H_1$ is a coderivation relative to $H_0$, and the correct similar condition on $\frac{d}{dt}H_0$.

For all $t$, $\sH$ yields the following: a diagram of homotopy algebras
$$U_t: V_t \leadsto W_t$$
along with $\infty$-quasi-isomorphisms
$$\Phi_t: V_F \leadsto V_t$$
$$\Psi_{1-t}: W_t \leadsto W_G.$$
Just as for coderivations, relative coderivations are uniquely determined by their composition with the canonical projection, so we may define a coderivation relative to $U_t$ by the following formula:
$$P_t : \sC(V_t) \leadsto W_t$$
$$P_t(X; v_1, ..., v_n) = \sH_1(X^{\alpha\beta})(v_1, ..., v_n)$$
where $(X; v_1, ..., v_n) \in \sC(V_t)$.

Since $P_t$ is a coderivation relative to the coalgebra map $U_t$, $\Psi_{1-t} \circ P_t \circ \Phi_t$ is a coderivation relative to $\Psi_{1-t} \circ U_t \circ \Phi_t$. It is then easy to check that
$$H_0(t) = \Psi_{1-t} \circ U_t \circ \Phi_t$$
$$H_1(t) = \Psi_{1-t} \circ P_t \circ \Phi_t$$
satisfy the required properties for $H=H_0 + H_1 dt$ to be the desired homotopy between $\Psi \circ U_F$ and $U_G \circ \Phi$.
\end{proof}

\appendix
\section{Colim from connected groupoids}

\begin{lemma}\label{colimma}
Let $F: \fg \to \cCh$ be a functor from a connected groupoid $\fg$. Then $\colim F = F(a)_{\Aut(a)}$, for any object $a \in \fg$.
\end{lemma}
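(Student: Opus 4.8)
The plan is to prove $\colim F = F(a)_{\Aut(a)}$ by establishing that the coinvariants $F(a)_{\Aut(a)}$, equipped with the obvious cocone maps, satisfy the universal property of the colimit. Since $\fg$ is a connected groupoid, for any object $b$ there exists at least one isomorphism $a \to b$, and all such isomorphisms differ by an automorphism of $a$; this is the structural fact that the entire argument rests upon. The colimit of $F$ is the coequalizer of $\bigoplus_{\text{arrows }f} F(\text{source}(f)) \rightrightarrows \bigoplus_{\text{objects }b} F(b)$, and the strategy is to show this coequalizer collapses onto a single copy of $F(a)$ modulo the action of $\Aut(a)$.

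First I would fix the object $a$ and, for each object $b \in \fg$, choose an isomorphism $g_b : a \iso b$ (taking $g_a = \Id$), which is possible by connectedness. I would then define a candidate cocone $\lambda_b : F(b) \to F(a)_{\Aut(a)}$ by $\lambda_b = \pi \circ F(g_b^{-1})$, where $\pi : F(a) \to F(a)_{\Aut(a)}$ is the canonical projection onto coinvariants. The key verification is that this family is genuinely a cocone, i.e. that $\lambda_{b'} \circ F(h) = \lambda_b$ for every arrow $h : b \to b'$ in $\fg$. Unwinding the definitions, this amounts to checking that $\pi \circ F(g_{b'}^{-1} \circ h \circ g_b) = \pi$; but $g_{b'}^{-1} \circ h \circ g_b$ is an automorphism of $a$ (every arrow in a groupoid is invertible, and the source and target here are both $a$), so the equality holds precisely because $\pi$ is invariant under the $\Aut(a)$-action by construction. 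In particular, taking $b = b' = a$ and $h$ an arbitrary automorphism recovers the invariance condition, confirming internal consistency.

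Next I would check the universal property. Given any other cocone $(\eta_b : F(b) \to T)$, I would show there is a unique map $u : F(a)_{\Aut(a)} \to T$ with $u \circ \lambda_b = \eta_b$. The map $\eta_a : F(a) \to T$ factors through $\pi$ because $\eta_a$ is already $\Aut(a)$-invariant: the cocone condition applied to each $w \in \Aut(a)$ gives $\eta_a \circ F(w) = \eta_a$. This factorization yields $u$, and composing with $\lambda_b = \pi \circ F(g_b^{-1})$ together with the cocone relation for $\eta$ along $g_b$ recovers $\eta_b$, giving existence. Uniqueness is immediate since $\lambda_a = \pi$ is surjective, so $u$ is determined on all of $F(a)_{\Aut(a)}$ by $u \circ \pi = \eta_a$.

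The main obstacle, and the only place demanding genuine care, is confirming that the construction is independent of the chosen isomorphisms $g_b$ and that the cocone condition holds for \emph{all} arrows rather than just the chosen ones; this is exactly where the groupoid hypothesis (invertibility of every morphism) is essential, since it guarantees that any composite landing from $a$ back to $a$ is an automorphism and hence killed by $\pi$. Everything else is formal diagram-chasing, so I would keep those verifications brief. I should note that the statement is functorial in the target category $\cCh$ and uses only that coinvariants are defined there; no exactness or characteristic-zero hypothesis is needed for this lemma itself, though such hypotheses are invoked when the lemma is \emph{applied} in Theorem \ref{incl-q-isom}.
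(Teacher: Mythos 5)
Your proof is correct and follows essentially the same route as the paper's: choose an isomorphism from $a$ to each object via connectedness, compose $F$ of its inverse with the projection $\pi : F(a) \to F(a)_{\Aut(a)}$ to get the cocone, observe that any composite returning to $a$ is an automorphism and hence killed by $\pi$, and obtain the universal map from the universal property of the quotient. If anything, you verify the universal property more explicitly than the paper does, and your closing remark that no exactness or characteristic-zero hypothesis enters the lemma itself (only its application) is accurate.
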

\begin{proof}
Choose $a\in \fg$; we need to show that $F(a)_{\Aut(a)}$ is a co-cone for $F: \fg \to \cCh$, and that it is universal. That is, for any other co-cone $X$ for $F$, there is a unique map $\tau:F(a)_{\Aut(a)} \to X$, and for any $a,b\in \fg$ there are maps $\pi_b: F(b) \to F(a)_{\Aut(a)}$ and $\pi_c : F(c) \to F(a)_{\Aut(a)}$, such that the following diagram commutes:
\begin{center}
\begin{tikzpicture}[descr/.style={fill=white,inner sep=1.5pt}]
        \matrix (m) [
            matrix of math nodes,
            row sep=2.2 em,
            column sep=2.2 em,
            text height=1.5ex, text depth=0.25ex
        ]
        {  F(b) & & F(c)    \\
            & F(a)_{\Aut(a)} &     \\
            & X &   \\
        };
        \draw[->, font=\scriptsize,>=latex]
        (m-1-1) edge node[auto]{$F(g)$} (m-1-3)
        (m-1-1) edge node[auto,swap]{$\pi_b$} (m-2-2)
        (m-1-3) edge node[auto]{$\pi_c$} (m-2-2)
        (m-1-1) edge[out=-90,in=155] node[auto,swap]{$\psi_b$} (m-3-2)
        (m-1-3) edge[out=-90,in=25] node[auto]{$\psi_c$} (m-3-2)
        (m-2-2) edge node[auto]{$\tau$} (m-3-2);
\end{tikzpicture}
\end{center}
Note that we trivially have this for $b=c=a$, where $g$ is any automorphism of $a$. Then $\pi_b=\pi_c=\pi$, the canonical projection $F(a) \onto F(a)_{\Aut(a)}$, and $\tau$ exists and is unique because of the universal property of quotients. Since $\fg$ is a connected groupoid we have maps $h_{ba}: b \to a$ and $h_{ca}: c \to a$ (for simplicity, let the inverses of these maps be denoted $h_{ab}$ and $h_{ac}$, respectively). Then we have the commuting diagram
\begin{center}
\begin{tikzpicture}[descr/.style={fill=white,inner sep=1.5pt}]
        \matrix (m) [
            matrix of math nodes,
            row sep=2.2 em,
            column sep=2.2 em,
            text height=1.5ex, text depth=0.25ex
        ]
        {   F(b) & & F(c)    \\
            F(a) & & F(a)    \\
            & F(a)_{\Aut(a)} &     \\
            & X &   \\
        };
        \draw[->, font=\scriptsize,>=latex]
        (m-1-1) edge node[auto]{$F(g)$} (m-1-3)
        (m-1-1) edge node[auto]{$F(h_{ba})$} (m-2-1)
        (m-1-3) edge node[auto,swap]{$F(h_{ca})$} (m-2-3)
        (m-2-1) edge node[auto]{$F(h_{ca} g h_{ab})$} (m-2-3)
        (m-2-1) edge node[auto,swap]{$\pi$} (m-3-2)
        (m-2-3) edge node[auto]{$\pi$} (m-3-2)
        (m-2-1) edge[out=-90,in=155] node[auto]{$\psi_a$} (m-4-2)
        (m-2-3) edge[out=-90,in=25] node[auto,swap]{$\psi_a$} (m-4-2)
        (m-1-1) edge[out=-135,in=180] node[auto,swap]{$\psi_b$} (m-4-2)
        (m-1-3) edge[out=-45,in=0] node[auto]{$\psi_c$} (m-4-2)
        (m-3-2) edge node[auto]{$\tau$} (m-4-2);
\end{tikzpicture}
\end{center}
where the left and right triangles commute because $X$ is a co-cone for $F$, and the top rectangle commutes by construction. This then gives us the first diagram with $\pi_b=\pi \circ F(h_{ba})$ and $\pi_c=\pi \circ F(h_{ca})$, and therefore $\colim F = F(a)_{\Aut(a)}$.
\end{proof}

\section{On cohomologous derivations and homotopic automorphisms}\label{app-der-aut}
In this appendix we investigate the relationship between cohomologous derivations and homotopic automorphisms of operads. The techniques and results of this appendix were inspired by Appendix A of \cite{StablePolyvector}, where a similar situation was considered in the specific setting of $L_\infty$-algebras.

All operads will be (possibly colored) quasi-free dg operads $\sO = \OP(\sM)$ ($\sM$ a collection), equipped with the weight filtration as in Section 3 (our motivating exampes are $\Cobar(\sC)$ and $\Cyl(\sC)$ for a reduced cooperad $\sC$). Let $\pi$ be the projection $\sO \to \sM$. We then have the filtered dg Lie algebra $\Der(\sO)$ of operad derivations of $\sO$, which contains the subalgebra
$$\Der'(\sO) = \{ D \in \Der(\sO) \hspace{2mm}|\hspace{2mm} \pi \circ D |_{\sM}  = 0 \}.$$
We also have the group $\Aut(\sO)$ of operad automorphisms of $\sO$, with subgroup
$$\Aut'(\sO) = \{ \varphi \in \Aut(\sO) \hspace{2mm}|\hspace{2mm} \pi \circ \varphi |_{\sM} = \id_{\sM} \}.$$
With these conditions, we have well-defined maps
$$D \mapsto \exp(D) = \sum_{k \geq 0} \frac{D^n}{n!} : Z^0(\Der'(\sO)) \to \Aut'(\sO)$$
and
$$\varphi \mapsto \log(\varphi) = \sum_{k \geq 1} (-1)^{n-1} \frac{(\varphi - \id)^n}{n} : \Aut'(\sO) \to Z^0(\Der'(\sO))$$
that are inverse to each other (it is straightforward to show that degree $0$ closed derivations exponentiate to operad automorphisms, and vice versa).

We can make $Z^0( \Der'(\sO))$ into a group with composition given by the Campbell-Hausdorff formula
$$\CH(X,Y) = \log ( \exp(X) \exp(Y) )$$
and identity $0$ (it is an easy exercise that if $X, Y$ are degree $0$ and closed, so too is $\CH(X,Y)$). It will be clear from context whether we consider $Z^0 (\Der'(\sO))$ as a dg Lie algebra, or as a group. The following proposition is well known:

\begin{proposition}\label{exp-log}
The maps
$$\exp: Z^0(\Der'(\sO)) \to \Aut'(\sO)$$
$$\log: \Aut'(\sO) \to Z^0(\Der'(\sO))$$
are inverse group isomorphisms.
\end{proposition}

%The above isomorphisms are ``natural'' in the following sense: if $f: Z^0(\Der'(\sO_1)) \to Z^0(\Der'(\sO_2))$ is a group homomorphism (e.g. induced by a dg Lie algebra morphism), we have the obvious commuting square
%\begin{center}
%\begin{tikzpicture}[>=latex]
%\matrix (m) [matrix of math nodes,
% 		row sep=2.5em,
%            	column sep=3.5em
%            ] 
%{Z^0(\Der'(\sO_1)) & Z^0(\Der'(\sO_2))\\
%\Aut'(\sO_1) & \Aut'(\sO_2)\\
%};
%
%\draw[->,font=\scriptsize] 
%   	(m-1-1) edge node[auto] {$f$} (m-1-2)
%        (m-2-1) edge node[auto] {$\exp(f)$} (m-2-2)
%	(m-1-1) edge node[auto] {$\exp$} (m-2-1)
%        (m-1-2) edge node[auto] {$\exp$} (m-2-2)
%        ;
%\end{tikzpicture}
%\end{center}
%where $\exp(f) = \exp \circ f \circ \log$. There is an obvious parallel statement when starting with a group homomorphism $G: \Aut'(\sO_1) \to \Aut'(\sO_2)$ yielding $\log(G): Z^0(\Der'(\sO_1)) \to Z^0(\Der'(\sO_2))$.

All of the above constructions and results are preserved when considering cohomologous derivations and homotopic automorphisms. It is straightforward to see that the group structure on $Z^0 \Der'(\sO)$ induces the group structure on $H^0 (\Der'(\sO))$. Recall \cite[Section 5.1]{NotesAlgOps} that two automorphisms $\varphi_1, \varphi_2 \in \Aut'(\sO)$ are homotopic if there exists an operad map
$$\sH: \sO \to \sO \ot \Omega^\bullet(\kk)$$
such that
$$\sH|_{t=0, dt=0} = \varphi_1, \hspace{3mm} \sH|_{t=1, dt=0} = \varphi_2$$
where $\Omega^\bullet(\kk)$ denotes the algebra of polynomial differential forms on $\kk$. We will also insist that such homotopies occur in $\Aut'(\sO)$, in the following sense. It is easy to see that for any specific choice of $t$, $\sH|_{dt=0}$ is an operad endomorphism of $\sO$ (see proof below); we additionally require that it be in $\Aut'(\sO)$. Let $\hAut'(\sO)$ denote the group of homotopy classes of automorphisms in $\Aut'(\sO)$. Then we have the following version of Proposition \ref{exp-log}:

\begin{proposition}\label{exp-log-h}
The induced maps
$$\exp: H^0(\Der'(\sO)) \to \hAut'(\sO)$$
$$\log: \hAut'(\sO) \to H^0(\Der'(\sO))$$
are inverse group isomorphisms.
\end{proposition}
\begin{proof}
This proof is essentially borrowed from Appendix A of \cite{StablePolyvector}. It must be shown that the above maps are well-defined; that they are inverse group isomorphisms is then essentially obvious.

To show that $\exp$ is well-defined, we will show that $\exp(\del(P))$ is homotopic to the identity in $\Aut'(\sO)$ for every degree $-1$ derivation $P \in \Der'(\sO)$. Let us denote by $t$ an auxiliary variable and consider the following map of dg operads:
$$\exp(t\del(P)): \sO \to \sO[t]$$
(this map lands in $\sO[t]$ for the same reasons that $\exp$ is well-defined in this situation). We have
$$\frac{d}{dt}\exp(t\del(P)) = \del(P) \circ \exp(t\del(P))$$
and hence the sum
$$\sH_P = \exp(t\del(P))+dt \hspace{1mm} P \circ \exp(t\del(P))$$
is a map of dg operads
$$\sH_P: \sO \to \sO \ot \Omega^\bullet (\kk).$$
It is clear that 
$$\sH_P|_{t=0, dt=0} = \id_\sO, \hspace{3mm} \sH_P|_{t=1, dt=0} = \exp(\del(P))$$
and therefore $\sH_P$ is the desired homotopy connecting $\id_\sO$ to the automorphism $\exp(\del(P))$.

To show that $\log$ is well-defined, we will show that if $\varphi \in \Aut'(\sO)$ is homotopic to $\id_\sO$ in $\Aut'(\sO)$, then $\log(\varphi)$ is exact. So assume that there is a homotopy
$$\sH: \sO \to \sO \ot \Omega^\bullet(\kk)$$
such that
$$\sH|_{t=0, dt=0} = \id_\sO, \hspace{3mm} \sH|_{t=1, dt=0} = \varphi.$$
$\sH$ necessarily has the form
$$\sH = \sH_0 + dt \hspace{1mm} \sH_1, \hspace{2mm}\text{where}\hspace{2mm} \sH_0, \sH_1: \sO \to \sO[t].$$
The compatibility of $\sH$ with the operadic multiplications is equivalent to the equations
$$ \sH_0(x \circ_i y) = \sH_0(x) \circ_i \sH_0(y) $$
$$ \sH_1(x \circ_i y) = \sH_1(x) \circ_i \sH_0(y) + (-1)^{|x|}  \sH_0(x) \circ_i \sH_1(y) $$
and compatibility with the differentials is equivalent to the equations
$$\del \circ \sH_0 = \sH_0 \circ \del$$
$$\frac{d}{dt} \sH_0 = \del \circ \sH_1 + \sH_1 \circ \del.$$
Extending $\sH_0, \sH_1$ linearly across $\kk[t]$ to get maps $\sH_0, \sH_1: \sO[t] \to \sO[t]$, we see that the previous equation implies that
$$\frac{d}{dt}\log(\sH_0) = \sum_{n=1}^\infty \frac{(-1)^{n-1}}{n} \sum_{m=0}^{n-1} (\sH_0 - \id)^m \circ [\del,\sH_1] \circ (\sH_0 - \id)^{n-m-1}$$
which can be rewritten as
$$\frac{d}{dt}\log(\sH_0) = [\del, Z]$$
where
$$Z=\sum_{n=1}^\infty \frac{(-1)^{n-1}}{n} \sum_{m=0}^{n-1} (\sH_0 - \id)^m \circ \sH_1 \circ (\sH_0 - \id)^{n-m-1}.$$
By integrating, this implies that $\log(\varphi) = \log(\sH_0)|_{t=1}$ is exact, provided that $Z$ is a derivation of $\sO[t]$. To show this, recall that for fixed $t$, $\sH_0$ is in $\Aut'(\sO)$. Consequently we can construct $\sH_0^{-1}$. Since $\sH_1$ is a derivation relative to $\sH_0$, $\sH_0^{-1}\sH_1$ is a derivation of $\sO[t]$. Now consider
$$\Psi(u)=\log(\sH_0\exp(u\sH_0^{-1}\sH_1)) = \sum_{n=1}^\infty \frac{(-1)^{n-1}}{n} (\sH_0\exp(u\sH_0^{-1}\sH_1) - \id)^n,$$
which is an element of $\Der'(\sO[t]) \widehat{\ot} \hspace{.5mm} \kk[u]$. Therefore  $\frac{d}{dt}\Psi(u)|_{u=0}$ is a derivation of $\sO[t]$, and it is straightforward to check that
$$ \frac{d}{dt}\Psi(u)|_{u=0} = \sum_{n=1}^\infty \frac{(-1)^{n-1}}{n} \sum_{m=0}^{n-1} (\sH_0 - \id)^m \circ \sH_1 \circ (\sH_0 - \id)^{n-m-1} = Z.$$
So $Z$ is indeed a derivation.

%SOMEHOW WE DEFINE $\sH_0^{-1}$, AND CONSEQUENTLY $W=\sH_0^{-1} \circ \sH_1 : \sO \to \sO[t]$.
%
%From the above equations, we can write
%$$\frac{d}{dt}\sH_0 = \sH_0 \circ [\del, W]$$
%and therefore
%$$\frac{d}{dt}\log(\sH_0) = \sum_{n=1}^\infty \frac{(-1)^{n-1}}{n} \sum_{m=0}^{n-1} (\sH_0 - \id)^m \circ \sH_0 \circ [\del,W] \circ (\sH_0 - \id)^{n-m-1}$$
%which can be rewritten as
%$$\frac{d}{dt}\log(\sH_0) = [\del, Z]$$
%where
%$$Z=\sum_{n=1}^\infty \frac{(-1)^{n-1}}{n} \sum_{m=0}^{n-1} (\sH_0 - \id)^m \circ \sH_0 \circ W \circ (\sH_0 - \id)^{n-m-1}$$

\end{proof}

\bibliography{references}

\end{document}